\renewcommand{\le}{\leqslant}
\renewcommand{\geq}{\geqslant}
\renewcommand{\ge}{\geqslant}
\newcommand{\matle}{\preceq}
\newcommand{\matls}{\prec}
\newcommand {\trsp}{^\mathsf {T}}
\newcommand {\matl}{\left[ \begin{matrix}}
\newcommand {\matr}{\end{matrix}\right]}
\newcommand {\Exp}{ \mathbb E }
\renewcommand {\Pr}{ \mathbb P }
\newcommand{\mtx}[1]{\mathbf{#1}}
\newcommand{\vct}[1]{\mathbf{#1}}
\newcommand{\mX}{ \mtx{X} }
\newcommand{\mY}{ \mtx{Y} }
\newcommand{\mZ}{ \mtx{Z} }
\newcommand{\mV}{ \mtx{V} }
\newcommand{\mI}{ \mtx{I} }
\newcommand{\mM}{ \mtx{M} }
\newcommand{\mH}{ \mtx{H} }
\newcommand{\mU}{ \mtx{U} }
\newcommand{\mSg}{ \mtx{\Sigma} }
\newcommand{\mS}{ \mtx{S} }
\newcommand{\mA}{ \mtx{A} }
\newcommand{\mB}{ \mtx{B} }
\newcommand{\mC}{ \mtx{C} }
\newcommand{\mE}{ \mtx{E} }
\newcommand{\mF}{ \mtx{F} }
\newcommand{\mQ}{ \mtx{Q} }
\newcommand{\vv}{\vct{v}}
\newcommand {\Var}{\mathbf{Var}}
\newcommand{\e}{\mathrm e}
\newcommand{\cF}{\mathcal{F}}
\newcommand{\cS}{\mathcal{S}}
\newcommand{\psiE}{\psi_{\mathrm{E}}}
\DeclareMathAlphabet{\mathbbmsl}{U}{bbm}{m}{sl}
\newcommand{\hwcmt}[1]{}
\newcommand{\tr}{\mathrm{tr}}
\title{Sharp Matrix Empirical Bernstein Inequalities\footnote{NeurIPS 2025 Poster}}
\author[1]{Hongjian Wang}
\author[2]{Aaditya Ramdas}
\affil[1, 2]{Department of Statistics and Data Science, Carnegie Mellon University}
\affil[2]{Machine Learning Department, Carnegie Mellon University} 
\affil[ ]{\texttt{ \{hjnwang,aramdas\}@cmu.edu  }}
\date{\today}
\newtheorem{theorem}{Theorem}[section]
\newtheorem{proposition}[theorem]{Proposition}
\newtheorem{corollary}[theorem]{Corollary}
\newtheorem{lemma}[theorem]{Lemma}
\theoremstyle{definition}
\newtheorem{example}[theorem]{Example}
\Crefname{fact}{Fact}{Facts}
\begin{document}

\maketitle

\begin{abstract}
    We present two sharp, closed-form empirical Bernstein inequalities for symmetric random matrices with bounded eigenvalues. By sharp, we mean that both inequalities adapt to the unknown variance in a tight manner: the deviation captured by the first-order $1/\sqrt{n}$ term asymptotically matches the matrix Bernstein inequality exactly, including constants, the latter requiring knowledge of the variance. 
Our first inequality holds for the sample mean of independent matrices, and our second inequality holds for a mean estimator under martingale dependence at stopping times. 
\end{abstract}


\section{Introduction}\label{sec:intro}

We are interested in nonasymptotic confidence sets for the common mean of independent or martingale-dependent bounded random matrices that optimally adapt to the unknown underlying variance. We first review the scalar case to set some context. 

\subsection{Background: Scalar Empirical Bernstein Inequalities}

The classical Bennett-Bernstein inequality (see Lemma 5 of \cite{audibert2009exploration}; also \Cref{sec:rmk-bb}) states that, for the average $\overline X_n$ of independent random scalars $X_1,\dots, X_n$ with common expected value $\mu = \Exp X_i$, common almost sure upper bound $|X_i| \le B$, and second moment upper bound $ \sum_{i=1}^n \Exp X_i^2 \le n\sigma^2$,
\begin{equation}\label{eqn:bi}
    \Pr\left(  \overline X_n  - \mu \ge \frac{ B\log(1/\alpha)  }{3n} + \sqrt{\frac{2 \sigma^2 \log(1/\alpha)}{n}}    \right) \le \alpha.
\end{equation}
It is clear that \eqref{eqn:bi} remains true if the assumptions are centered instead: $ |X_i - \mu| \le B$ and $ \sum_{i=1}^n \Var(X_i) \le n \sigma^2$. A crucial feature of \eqref{eqn:bi} is that if $\sigma^2 \approx  \Exp X_1^2  \ll B^2$, the deviation is dominated by the ``variance term" $\Theta\left(\sqrt{n^{-1} \sigma^2 \log(1/\alpha)}\right)$, tighter than the ``boundedness term'' $\Theta(\sqrt{n^{-1} B^2 \log(1/\alpha) })$ that dominates if \citeauthor{hoeffding1963probability}'s inequality~\citeyearpar{hoeffding1963probability} is applied instead in the absence of the variance bound $\sigma^2$.

In practice, one often knows $B$ but has no prior access to the possibly much smaller $\sigma$.
Thus, such bounds are usually only used in theoretical analysis, but not to practically construct confidence bounds for the mean. For the latter task, so-called nonasymptotic \emph{empirical} Bernstein (EB) inequalities are therefore of particular interest. These inequalities often only assume the almost sure upper bound $B$ of the random variables and are agnostic and \emph{adaptive} to the true variances $\Var(X_i)$, to the effect that the final deviation is still dominated by an asymptotically $\Theta\left(\sqrt{n^{-1} \sigma^2 \log(1/\alpha)}\right)$ variance term, instead of $\Theta(\sqrt{n^{-1} B^2 \log(1/\alpha) })$ from Hoeffding's inequality under the \emph{same} boundedness assumption.

Scalar EB inequalities are derived from two very different types of techniques. First, a {union bound} between a non-empirical (``oracle'') Bernstein inequality and a concentration inequality on the sample variance, which is employed by early empirical Bernstein results \citep{audibert2009exploration,maurer2009empirical}. For example, for i.i.d., $[0,1]$-bounded $X_1,\dots,X_n$, and their Bessel-corrected sample variance $\hat \sigma^2_n$,  \citet[Theorem 4]{maurer2009empirical} prove the EB inequality
\begin{equation}\label{eqn:mp-eb}
     \Pr\left(  \overline X_n  - \mu  \ge  \sqrt{\frac{2 \hat \sigma^2_n \log(2/\alpha)}{n}}  +  \frac{ 7\log(2/\alpha)  }{3(n-1)}  \right) \le \alpha.
\end{equation}
Second, the self-normalization martingale techniques of \citet[Theorem 4]{howard2021time} and  \cite{waudby2020estimating},  which enable sharper rates, stopping time-valid concentration, martingale dependence, and variance proxy by predictable estimates other than the sample variance.  For example, \citet[Theorem 2, Remark 1]{waudby2020estimating} prove the following EB inequality for $[0,1]$-bounded random variables $X_1,\dots, X_n$ with common conditional mean $\mu=\Exp[X_i | X_1, \dots, X_{i-1}]$:
\begin{equation}\label{eqn:eb-informal}
    \Pr\left( \hat\mu_n -  \mu  \ge \sqrt{\frac{2 \log(1/\alpha) V_{n,\alpha}} {n}}   \right) \le \alpha.
\end{equation}
Above, $ \hat\mu_n $ is a particular weighted average of $X_1,\dots, X_n$, and $V_{n,\alpha} = V(\alpha, X_1,\dots, X_n)$ a quantity depending on the sample and $\alpha$ that converges to $\sigma^2$ almost surely
in the $n\to\infty$ limit, if $X_1,\dots, X_n$ have a common conditional variance $\sigma^2$.
\footnote{This was originally proved by \cite{waudby2020estimating} under i.i.d.\ assumption. We prove it under martingale dependence in our matrix result.}

These exact terms will become clear when we present our matrix result later in \Cref{sec:res} (taking $d=1$), but one can already notice the important fact that \eqref{eqn:eb-informal} matches \eqref{eqn:bi} asymptotically without requiring a known variance bound: letting $D_n=\sqrt{2n^{-1} \log(1/\alpha) V_{n,\alpha}}$ be the deviation term of \eqref{eqn:eb-informal}, 
\begin{equation}\label{eqn:sharp}
    \sqrt{n} D_n \stackrel{a.s.}\to \sqrt{2 \log(1/\alpha) \sigma^2 },
\end{equation}
a limit also attained by the oracle Bennett-Bernstein inequality \eqref{eqn:bi}'s deviation term. A one-sided scalar EB inequality is said to be \emph{sharp} if its deviation term $D_n$ satisfies the oracular limit \eqref{eqn:sharp}: its first order term, including constants, asymptotically matches the oracle Bernstein inequality which requires knowledge of $\sigma^2$. We see that while \eqref{eqn:eb-informal} is sharp, \eqref{eqn:mp-eb} is not sharp.

Indeed, these two methods are inherently different and as argued convincingly by \citet[Appendix A.8]{howard2021time}: the latter's avoidance of the union bound produces a better concentration.   \cite{waudby2020estimating} were the first ones to prove that their EB inequality is sharp, pointing out that the union bound-based inequalities are not sharp (but only slightly so). We further discuss this issue in \Cref{sec:ub}, showing that one can make the Maurer-Pontil inequality \eqref{eqn:mp-eb} sharp by using a smarter union bound, but it still is empirically looser than \eqref{eqn:eb-informal}. 
Other EB inequalities have been proved in the literature in between the above sets of papers, but they are even looser than the original, so we omit them.


\subsection{Our Contributions: Matrix Empirical Bernstein Inequalities}\label{sec:mat-eb}

Exponential concentration inequalities for the sum of independent matrices are in general much harder to obtain. 
\citet[Theorem 6.1]{tropp2012user} proved a series of Bennett-Bernstein inequalities for the average $\overline\mX_n$ of independent $d\times d$ symmetric matrices $\mX_1,\dots, \mX_n$ with common mean $\Exp \mX_i = \mM$, common eigenvalue upper bound $\lambda_{\max}(\mX_i) \le B$, and $ \sum_{i=1}^n\Exp \mX_i^2 \matle n \mV$. For example, the Bennett-type result implies the following ($\| \cdot \|$ being the spectral norm),
\begin{equation}\label{eqn:mbi}
    \Pr\left( \lambda_{\max} \left( \overline\mX_n -\mM \right) \ge \frac{ B \log(d/\alpha)   }{3n} + \sqrt{  \frac{2 \log(d/\alpha) \|\mV\|  }{n}}  \right) \le \alpha.
\end{equation}
The analogy between \eqref{eqn:bi} and \eqref{eqn:mbi} is straightforward to notice, including matching constants. See \Cref{sec:rmk-bb} for some remarks on these two non-empirical Bernstein results and a proof of \eqref{eqn:mbi}. We shall explore some of the techniques by \cite{tropp2012user} later when developing our results.

The main contribution of the current paper is two empirical Bernstein inequalities for matrices derived in analogy to the two methods in the scalar case mentioned earlier. First, we generalize the union bound and plug-in techniques by \cite{audibert2009exploration,maurer2009empirical} to matrices and obtain:
\begin{proposition}[\Cref{thm:mp-mbi} of this paper, shortened]\label{prop:mpinformal}
 Let $n$ be even and $\mX_1,\dots , \mX_n$ be i.i.d.\ symmetric matrices with eigenvalues in $[0,1]$ and mean $\mM$. Let $\mV_n^*$ be the paired sample variance  $n^{-1}((\mX_1 - \mX_2 )^2+ (\mX_3 - \mX_4 )^2+ \dots + (\mX_{n-1} - \mX_n )^2)$. Then,
\begin{equation} \label{eqn:meb1-informal}
    \Pr\left( \lambda_{\max} \left( \overline\mX_n -\mM \right) \ge  \sqrt{  \frac{2  \|  \mV^*_n  \|  \log \frac{nd}{(n-1)\alpha}  }{n}}  + \mathcal{O}\left(\frac{ \log(nd/\alpha)}{ n
  }\right)\right) \le \alpha.
\end{equation}
\end{proposition}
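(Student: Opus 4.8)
The plan is to follow the union-bound-and-plug-in strategy of \cite{audibert2009exploration,maurer2009empirical}, with the matrix Bennett--Bernstein inequality \eqref{eqn:mbi} playing the role of the oracle and a \emph{second} application of \eqref{eqn:mbi} serving as the empirical-variance bound. Write $\mV := \Var(\mX_1) = \Exp(\mX_1-\mM)^2$ for the true matrix variance. Since the $\mX_i$ have eigenvalues in $[0,1]$, the centered summands satisfy $\lambda_{\max}(\mX_i-\mM)\le 1$ and $\sum_i \Exp(\mX_i-\mM)^2 = n\mV$, so \eqref{eqn:mbi} applied to $\mX_i-\mM$ gives, off an event of probability at most $\alpha_1$,
\begin{equation*}
  \lambda_{\max}(\overline\mX_n-\mM) \le \sqrt{\frac{2\|\mV\|\log(d/\alpha_1)}{n}} + \frac{\log(d/\alpha_1)}{3n}.
\end{equation*}
The entire difficulty is then to replace the unknown $\|\mV\|$ by the observable $\|\mV^*_n\|$ while paying only an $\mathcal{O}(\log/n)$ correction.

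For that, observe that $\mV^*_n$ is exactly the empirical mean of the $n/2$ i.i.d.\ PSD matrices $\mY_j := \tfrac12(\mX_{2j-1}-\mX_{2j})^2$, which have eigenvalues in $[0,\tfrac12]$ and common mean $\Exp\mY_j = \Var(\mX_1)=\mV$; the pairing is what makes the estimator exactly unbiased for the \emph{centered} variance with no mean-subtraction bias. I would apply \eqref{eqn:mbi} a second time to the lower tail of $-\mY_j$ in order to control $\lambda_{\max}(\mV-\mV^*_n)$. The crucial point is that the variance proxy of the $\mY_j$ is itself governed by $\|\mV\|$: a PSD matrix $\mY\matle\tfrac12\mI$ obeys $\mY^2\matle\tfrac12\mY$, so $\Var(\mY_j)\matle\Exp\mY_j^2\matle\tfrac12\mV$ and hence $\|\Var(\mY_j)\|\le\tfrac12\|\mV\|$. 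Feeding this into \eqref{eqn:mbi} with $n/2$ summands and budget $\alpha_2$ (the $\tfrac12$ and the $n/2$ cancel) yields, off an event of probability at most $\alpha_2$,
\begin{equation*}
  \|\mV\| = \lambda_{\max}(\mV) \le \|\mV^*_n\| + \lambda_{\max}(\mV-\mV^*_n) \le \|\mV^*_n\| + \sqrt{\frac{2\|\mV\|\log(d/\alpha_2)}{n}} + \mathcal{O}\!\left(\frac{\log(d/\alpha_2)}{n}\right).
\end{equation*}

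This is a self-referential quadratic inequality in $\sqrt{\|\mV\|}$, and solving it (via $\sqrt{a+b}\le\sqrt a+\sqrt b$) is what delivers the sharp remainder: it gives $\sqrt{\|\mV\|}\le\sqrt{\|\mV^*_n\|}+\mathcal{O}(\sqrt{\log(d/\alpha_2)/n})$, with the $\mathcal{O}(1/\sqrt n)$ error living \emph{under} a square root rather than being added directly. Multiplying by $\sqrt{2\log(d/\alpha_1)/n}$ converts this slack into an $\mathcal{O}(\log/n)$ additive term, which is precisely why the Bernstein-type variance bound (rather than a Hoeffding/McDiarmid bound, whose additive $\mathcal{O}(1/\sqrt n)$ error would only give an $\mathcal{O}(n^{-3/4})$ remainder) is needed. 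Intersecting the two events then proves the claim with confidence $1-(\alpha_1+\alpha_2)$ and leading log $\log(d/\alpha_1)$. To realize the stated constant I would not split the budget evenly---that costs a non-vanishing factor inside the log and destroys sharpness---but instead take $\alpha_1=\tfrac{n-1}{n}\alpha$ and $\alpha_2=\tfrac1n\alpha$: then $\alpha_1+\alpha_2=\alpha$, the leading term carries $\log\frac{nd}{(n-1)\alpha}$ exactly as stated, and $\log(d/\alpha_2)=\log(nd/\alpha)$ is harmlessly absorbed into the $\mathcal{O}(\log(nd/\alpha)/n)$ remainder.

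The main obstacle I anticipate is the second step: defining the paired-difference matrices $\mY_j$ correctly, verifying the semidefinite bounds $\mY_j\matle\tfrac12\mI$ and $\mY_j^2\matle\tfrac12\mY_j$ that make the variance proxy self-bounding by $\|\mV\|$, and carefully applying the one-sided (lower-tail) matrix Bernstein inequality to these bounded but non-centered summands. Everything downstream---solving the quadratic and tuning $(\alpha_1,\alpha_2)$---is routine once that self-bounding variance estimate is in hand.
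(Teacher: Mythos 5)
Your proposal is correct and follows essentially the same route as the paper's proof: matrix Bennett--Bernstein applied once to the sample mean and once to the paired variance estimator (whose variance proxy is self-bounded by $\|\mV\|$ since its summands lie in $\cS_d^{[0,1]}$), a square-root conversion that keeps the variance-estimation error under the radical, and the imbalanced $\alpha(n-1)/n + \alpha/n$ split. The only cosmetic differences are that you use a one-sided lower-tail bound and solve the self-referential quadratic in $\sqrt{\|\mV\|}$ directly, whereas the paper uses a two-sided bound on $\|\mV_n^*-\mV\|$ and a case split via its Lemma on $\sqrt{a}-\sqrt{b}$ to extract the explicit constant $\sqrt{5/6}+1/\sqrt{2}$.
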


Second, we provide a faithful generalization of \eqref{eqn:eb-informal} to the matrix case which we informally state as follows.
\begin{proposition}[\Cref{cor:empb-fixn} of this paper, shortened]\label{prop:informal}
    Let $\mX_1,\dots , \mX_n$ be symmetric matrices with eigenvalues in $[0,1]$ and common conditional mean $\mM = \Exp[ \mX_i | \mX_1,\dots, \mX_{i-1} ]$. 
    For an appropriate weighted average $\hat \mM_n$ of $\mX_1,\dots, \mX_n$ and an appropriate sample variance proxy $v_{n,\alpha} =v(\alpha, \mX_1,\dots, \mX_n) > 0$,
    \begin{equation}\label{eqn:meb-informal}
    \Pr\left( \lambda_{\max}(\hat\mM_n -  \mM) \ge \sqrt{\frac{2 \log(d/\alpha) v_{n,\alpha}} {n}}   \right) \le \alpha.
\end{equation}
    Further, if $\{\mX_i\}$ have a common conditional variance $\mV$, $v_{n,\alpha}$ converges almost surely to 
    $\|\mV \|$. 
\end{proposition}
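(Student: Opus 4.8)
The plan is to build a scalar nonnegative supermartingale from the matrices by the trace-exponential method, apply Ville's inequality, and then convert the resulting spectral bound into a confidence statement for $\lambda_{\max}(\hat\mM_n - \mM)$. Write $\cF_{i-1}$ for the $\sigma$-field generated by $\mX_1,\dots,\mX_{i-1}$, fix a predictable scalar sequence $\lambda_i\in[0,1)$ (tuned below), and set
$$\mathbf{G}_i := \lambda_i(\mX_i - \mM) - \psiE(\lambda_i)(\mX_i - \mM)^2, \qquad \mathbf{L}_k := \sum_{i=1}^k \mathbf{G}_i,\qquad W_k := \tr\exp(\mathbf{L}_k).$$
The first step is a \emph{matrix} version of the Fan--Grama--Liu inequality: since the scalar bound $\exp(\lambda t - \psiE(\lambda)t^2)\le 1 + \lambda t$ holds for all $t\ge -1$ and $\lambda\in[0,1)$, and since $\lambda_i(\mX_i-\mM)-\psiE(\lambda_i)(\mX_i-\mM)^2$ is a function of the single symmetric matrix $\mX_i-\mM$ (whose eigenvalues lie in $[-1,1]$), the spectral mapping theorem gives $\exp(\mathbf{G}_i)\matle \mI + \lambda_i(\mX_i-\mM)$ in the Loewner order. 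Taking conditional expectations and using $\Exp[\mX_i\mid\cF_{i-1}]=\mM$ yields $\Exp[\exp(\mathbf{G}_i)\mid\cF_{i-1}]\matle\mI$.

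The second step turns this conditional MGF bound into a supermartingale via Lieb's concavity theorem, exactly as in \cite{tropp2012user}. Conditioning on $\cF_{k-1}$ (so $\mathbf{L}_{k-1}$ is fixed) and combining Jensen's inequality with concavity of $\mathbf{A}\mapsto\tr\exp(\mathbf{H}+\log\mathbf{A})$,
$$\Exp[W_k\mid\cF_{k-1}] = \Exp[\tr\exp(\mathbf{L}_{k-1}+\mathbf{G}_k)\mid\cF_{k-1}] \le \tr\exp\bigl(\mathbf{L}_{k-1}+\log\Exp[\exp(\mathbf{G}_k)\mid\cF_{k-1}]\bigr)\le \tr\exp(\mathbf{L}_{k-1}) = W_{k-1},$$
the last step using operator monotonicity of $\log$ and $\Exp[\exp(\mathbf{G}_k)\mid\cF_{k-1}]\matle\mI$. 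Hence $(W_k)$ is a nonnegative supermartingale with $W_0=\tr\mI=d$. Ville's inequality gives $\Pr(\exists k:\ W_k\ge d/\alpha)\le\alpha$, and since $\tr\exp(\mathbf{L}_n)\ge\exp(\lambda_{\max}(\mathbf{L}_n))$, with probability at least $1-\alpha$ we have $\lambda_{\max}(\mathbf{L}_n)\le\log(d/\alpha)$, simultaneously for all $n$.

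The third step is the rearrangement. Writing $\hat\mM_n := (\sum_i\lambda_i\mX_i)/(\sum_i\lambda_i)$ and applying Weyl's inequality to $\mathbf{L}_n = \sum_i\lambda_i(\mX_i-\mM)-\sum_i\psiE(\lambda_i)(\mX_i-\mM)^2$ gives, on the good event,
$$\lambda_{\max}(\hat\mM_n-\mM)\ \le\ \frac{\log(d/\alpha) + \lambda_{\max}\bigl(\sum_i\psiE(\lambda_i)(\mX_i-\mM)^2\bigr)}{\sum_i\lambda_i}.$$
This is where the main difficulty sits: the right-hand side is an \emph{oracle} quantity involving the unknown $\mM$, and because the matrix exponential is not operator monotone one cannot simply substitute a predictable empirical proxy inside $\mathbf{G}_i$. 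The substitution must be carried out here, at the level of this scalar inequality, by expanding $(\mX_i-\mM)^2 = (\mX_i-\hat\mM_{i-1})^2 + \text{(cross terms)}$ for a predictable running estimate $\hat\mM_{i-1}$ and controlling the cross terms, whose operator norm is $O(\|\hat\mM_{i-1}-\mM\|)$. One then \emph{defines} $v_{n,\alpha}$ as $n/(2\log(d/\alpha))$ times the square of the resulting data-computable upper bound, so that $\lambda_{\max}(\hat\mM_n-\mM)\le\sqrt{2\log(d/\alpha)v_{n,\alpha}/n}$ holds by construction; the delicate point is to keep the cross-term correction \emph{lower order}, so that sharpness is not lost, which forces the tuning $\lambda_i\asymp\sqrt{2\log(d/\alpha)/(n\|\mV\|)}$ estimated on the fly.

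Finally, for the convergence claim, $(\mX_i-\mM)^2-\mV$ is a bounded martingale-difference sequence (the eigenvalues of $(\mX_i-\mM)^2$ lie in $[0,1]$), so the strong law applied entrywise gives $n^{-1}\sum_i(\mX_i-\mM)^2\to\mV$ almost surely; combined with $\hat\mM_{i-1}\to\mM$ a.s.\ and a Ces\`aro argument this yields $n^{-1}\sum_i(\mX_i-\hat\mM_{i-1})^2\to\mV$ and makes the cross-term correction vanish. Taking $\lambda_i\to\lambda^\star=\sqrt{2\log(d/\alpha)/(n\|\mV\|)}$ and balancing the two terms $\log(d/\alpha)/(\sum_i\lambda_i)$ and $\tfrac12\lambda^\star\lambda_{\max}(n^{-1}\sum_i(\mX_i-\hat\mM_{i-1})^2)$ shows the half-width converges to $\sqrt{2\log(d/\alpha)\|\mV\|/n}$, whence $v_{n,\alpha}\to\|\mV\|$ by continuity of $\|\cdot\|=\lambda_{\max}(\cdot)$ on symmetric positive semidefinite matrices. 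The central obstacle throughout is the empirical replacement of the variance term while preserving the exact leading constant: unlike the scalar case, where one inverts a quadratic in $\mM$ in closed form, the matrix quadratic cannot be solved explicitly, so this must be achieved through careful cross-term bounding downstream of the trace-exponential step.
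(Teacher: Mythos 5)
Your first two steps (the matrix Fan--Grama--Liu bound via the transfer rule, and the Lieb--Tropp trace-exponential supermartingale with Ville's inequality) are exactly the machinery the paper uses, but you have centered the exponential at the \emph{unknown} mean $\mM$, and this creates a gap that your third step does not close. With $\mathbf{G}_i = \lambda_i(\mX_i-\mM) - \psiE(\lambda_i)(\mX_i-\mM)^2$, the quadratic variation in your final bound is the oracle quantity $\sum_i \psiE(\lambda_i)(\mX_i-\mM)^2$. You acknowledge this and propose to expand $(\mX_i-\mM)^2 = (\mX_i-\hat\mM_{i-1})^2 + (\text{cross terms})$ ``downstream,'' but the cross terms still contain $\mM$: to dominate them by a data-computable quantity you need a high-probability bound on $\|\hat\mM_{i-1}-\mM\|$, i.e.\ a second concentration inequality and a union bound, which reintroduces exactly the $\log(Cd/\alpha)$ inflation (or worse, a circular bootstrap on the very event you are constructing) that the self-normalized method is designed to avoid. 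The only almost-sure alternative, $\|\hat\mM_{i-1}-\mM\|\le 1$, makes the correction $\sum_i\psiE(\lambda_i)\cdot\mathcal O(1)$ of the same order $n^{-1/2}$ as the leading term after normalizing by $\sum_i\lambda_i$, destroying sharpness. Defining $v_{n,\alpha}$ ``so that the inequality holds by construction'' is circular unless this control is actually supplied, and it is the hardest part of the proof, not a detail.

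The paper avoids the problem by moving the predictable center inside the exponential from the start: \Cref{lem:cond-lem} applies the Fan inequality to $\mX_i - \widehat{\mX}_i$ for a \emph{predictable} $\widehat{\mX}_i$ (the running mean), so the conditional expectation is bounded not by $\mI$ but by the extra factor $\exp(\gamma_i(\mM - \widehat{\mX}_i))$; the generalized supermartingale lemma (\Cref{lem:howardlieblemma}, with both an adapted correction $\mC_i$ and a predictable correction $\mC_i'$) absorbs this factor so that the resulting supermartingale \eqref{eqn:eb-nsm} has its linear term centered at $\mM$ but its quadratic term equal to $(\mX_i-\widehat{\mX}_i)^2$, which is empirical by construction. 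No cross-term analysis and no union bound are needed; the sharp constant then follows from the $\psiE(\gamma)\le\gamma^2$-type expansion together with strong consistency of the running mean and sample variance under martingale dependence (\Cref{cor:consistency-mean,cor:consistency-var}). To rescue your route you would need to prove the matrix analogue of \Cref{lem:cond-lem} rather than defer the empirical substitution to after Ville's inequality.
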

The detailed description of $\hat \mM_n$ and $v_{n,\alpha}$ will be furnished in \Cref{sec:res}. As in the scalar case, we say a one-sided matrix EB inequality is \emph{sharp} if its deviation term $D_n$ satisfies
\begin{equation}
    \sqrt{n} D_n \stackrel{a.s.}\to \sqrt{2 \log(d/\alpha) \| \mV \| },
\end{equation}
 as does that of the oracle inequality \eqref{eqn:mbi}. We see that both results above are sharp matrix empirical Bernstein inequalities.
It is also worth remarking that both \eqref{eqn:eb-informal} and our matrix generalization \eqref{eqn:meb-informal} are special fixed-time cases of some \emph{time-uniform} concentration inequalities 
that control the tails of all $\{\hat \mu_n\}_{n \ge 1}$ or $\{ \hat \mM_n \}_{n \ge 1}$ simultaneously,
enabling sequential analysis. This will become clear as we develop our results.

In applications e.g.\ covariance estimation, our matrix EB inequalities can lead to tremendous improvements over the oracle matrix Bernstein inequality \eqref{eqn:mbi}.
To the best of our knowledge, the only other matrix EB inequality in the literature is the contemporaneous result by \citet[Corollary 3.5]{kroshnin2024bernstein}, which is not sharp.
Besides the work cited above, some other authors have also contributed to the literature of Bernstein or empirical Bernstein inequalities. These include EB inequalities for vectors by \cite{chugg2023time},  for Banach space elements by \cite{martinez2024empirical}; a time-uniform oracle matrix Bernstein inequality by \cite{howard2021time}; a dimension-free version of \eqref{eqn:mbi} by \cite{MINSKER2017111}; and another oracle matrix Bernstein inequality by \cite{mackey2014matrix}. Some of these will be discussed in \Cref{sec:comp}. We also discuss some other closed-form scalar EB inequalities by \cite{tolstikhin2013pac,mhammedi2019pac,mhammedi2021risk,jang2023tighter,orabona-up,shekhar2023near} in \Cref{sec:others}.



\section{Preliminaries}

\paragraph{Notation} Let $\cS_d$ denote the set of all $d\times d$ real-valued symmetric matrices, which is the only class of matrices considered in this paper. These matrices are denoted by bold upper-case letters $\mA, \mB$, etc.
For $I \subseteq \mathbb R$, we denote by $\cS_d^I$ the set of all real symmetric matrices whose eigenvalues are all in $I$.  $\cS_d^{ [0,\infty) }$, the set of positive semidefinite and $\cS_d^{ (0,\infty) }$, the set of positive definite matrices are simply denoted by $\cS_d^{+}$ and $\cS_d^{++}$ respectively. 
The Loewner partial order is denoted $\matle$, where $\mA \matle \mB$ means $\mB - \mA$ is positive semidefinite, and $\mA \matls \mB$ means $\mB - \mA$ is positive definite. We use $\lambda_{\max}$ to denote the largest eigenvalue of a matrix in $\cS_d$, and $\| \cdot \|$ its spectral norm, i.e., the largest absolute value of eigenvalues. As is standard in matrix analysis, a scalar-to-scalar function $f:I \to J$ is identified canonically with a matrix-to-matrix function $f: \cS_d^I \to \cS_d^J$, following the definition
\begin{equation}
   f: \mU\trsp \operatorname{diag}[\lambda_1,\dots, \lambda_d] \mU \mapsto \mU\trsp \operatorname{diag}[f(\lambda_1),\dots, f(\lambda_d)] \mU.
\end{equation}
Matrix powers $\mX^k$, logarithm $\log \mX$, and exponential $\exp\mX$ are common examples. It is worth noting that the monotonicity of $f:I \to J$ is usually \emph{not} preserved when lifted to $f: \cS_d^I \to \cS_d^J$ in the $\matle$ order. The matrix logarithm, however, is monotone. On the other hand, for any monotone $f:I\to J$, the function $\tr \circ f : \cS_d^{I} \to \mathbb R$ is always monotone.


We work on a filtered probability space $(\Omega, \cF, \Pr)$ where $\cF :=  \{\cF_n\}_{n \geq 1}$  is a filtration, and we assume $\cF_0 := \{ \varnothing, \Omega \}$.  We say a process $X := \{ X_n \}$ is adapted if $X_n$ is $\cF_n$-measurable for all integers $n \ge 0$ or sometimes $n \ge 1$; predictable if $X_n$ is $\cF_{n-1}$-measurable for all integers $n\ge 1$.

\paragraph{Nonnegative Supermartingales}
Many of the classical concentration inequalities for both scalars and matrices are derived via Markov's inequality. \cite{howard2020time} pioneered using \emph{Ville's inequality} for \emph{nonngative supermartingales} to construct time-uniform concentration inequalities. An adapted scalar-valued process $\{X_n\}_{n\ge 0}$ is called a nonnegative supermartingale if $X_n \ge 0$ and $\Exp[  X_{n+1} | \cF_{n} ] \le X_n$ for all $n \ge 0$ (all such inequalities are intended in the $\Pr$-almost sure sense). Let us state the following two well-known forms of Ville's inequality, both generalizing Markov's inequality.
\begin{lemma}[Ville's inequality]\label{lem:vi} Let $\{X_n\}$ be a nonnegative supermartingale and $\{ Y_n \}$ be an adapted process such that $Y_n \le X_n$ for all $n$. For any $\alpha \in (0,1]$,
  \begin{equation}
    \textstyle  \Pr\left( \sup_{n \ge 0} Y_n \ge X_0/\alpha  \right) \le \alpha.
  \end{equation} 
  Equivalently, for any stopping time $\tau$,
   \begin{equation}
  \textstyle     \Pr\left(  Y_\tau \ge  X_0/\alpha  \right) \le \alpha.
  \end{equation}  
\end{lemma}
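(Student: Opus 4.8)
The plan is to reduce the claim to a maximal inequality for the supermartingale $\{X_n\}$ itself and then invoke optional stopping. Because $Y_n \le X_n$ for every $n$, we have the event inclusion $\{\sup_{n\ge 0} Y_n \ge X_0/\alpha\} \subseteq \{\sup_{n \ge 0} X_n \ge X_0/\alpha\}$, so it suffices to bound the probability of the latter. A useful preliminary observation is that $X_0$ is $\cF_0$-measurable and $\cF_0 = \{\varnothing, \Omega\}$ is trivial, so $X_0$ is almost surely a constant and the threshold $c := X_0/\alpha$ is deterministic.

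For the core estimate I would introduce the hitting time $\tau := \inf\{n \ge 0 : X_n \ge c\}$, with the convention $\inf \varnothing = \infty$. The stopped process $\{X_{n \wedge \tau}\}_{n \ge 0}$ is again a nonnegative supermartingale, so for every finite $n$ one has $\Exp[X_{n\wedge\tau}] \le \Exp[X_0] = X_0$. On the event $\{\tau \le n\}$ the stopped value equals $X_\tau \ge c$, while elsewhere $X_{n\wedge\tau} \ge 0$; hence $c\,\Pr(\tau \le n) \le \Exp[X_{n\wedge\tau}] \le X_0$, i.e.\ $\Pr(\tau \le n) \le X_0/c = \alpha$. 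Letting $n \to \infty$ and using continuity from below of $\Pr$ gives $\Pr(\tau < \infty) \le \alpha$.

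The remaining---and I expect only---delicate point is that $\{\tau < \infty\} = \{\exists\, n : X_n \ge c\}$ can be strictly smaller than $\{\sup_n X_n \ge c\}$, since the supremum may be approached but never attained. I would close this gap by a scaling limit: for each $\beta \in (0,1)$ the set $\{\sup_n X_n \ge c\}$ is contained in $\{\exists\, n : X_n > \beta c\}$, and rerunning the argument above with threshold $\beta c$ yields probability at most $\alpha/\beta$; sending $\beta \uparrow 1$ recovers the bound $\alpha$. Combined with the inclusion from the first paragraph, this proves the stated $\sup$ form. Finally, the two displays are equivalent: the $\sup$ form implies the stopping-time form because $\{Y_\tau \ge c\} \subseteq \{\sup_n Y_n \ge c\}$ for every stopping time $\tau$, while applying the stopping-time form to the particular choice $\tau = \inf\{n : Y_n \ge c\}$ recovers the $\sup$ form. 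Every step except the $\beta \uparrow 1$ passage is a routine optional-stopping manipulation.
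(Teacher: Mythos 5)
The paper never proves this lemma---it is quoted as a well-known result (Ville's inequality) and used as a black box---so there is no in-paper argument to compare against; your proof stands or falls on its own, and it is correct. The reduction to $\{X_n\}$ via $Y_n \le X_n$, the observation that $X_0$ is deterministic because $\cF_0$ is trivial, the optional-stopping bound $c\,\Pr(\tau \le n) \le \Exp[X_{n\wedge\tau}] \le X_0$, and the $\beta \uparrow 1$ scaling to pass from the hitting event $\{\exists n: X_n \ge c\}$ to $\{\sup_n X_n \ge c\}$ are all sound; you correctly identified the one genuinely delicate point (the supremum may be approached without being attained) and resolved it. Three pedantic caveats: (i) the scaling step needs $c>0$, i.e.\ $X_0>0$; in the degenerate case $X_0=0$ a nonnegative supermartingale is a.s.\ identically zero and the inequality as literally stated requires the usual convention (in the paper's application $X_0 = d > 0$, so this is moot); (ii) in the stopping-time form one must fix a meaning for $Y_\tau$ on $\{\tau=\infty\}$ (e.g.\ $\limsup_n Y_n$), under which your inclusion $\{Y_\tau \ge c\} \subseteq \{\sup_n Y_n \ge c\}$ still holds; (iii) your converse direction of the claimed equivalence, applied to $\tau=\inf\{n: Y_n\ge c\}$, literally yields $\Pr(\exists n: Y_n \ge c)\le \alpha$ rather than the $\sup$ form, but the same $\beta$-scaling closes that residual gap.
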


\paragraph{Matrix CGF Supermartingales}
The Chernoff-Cram\'er MGF method cannot be directly applied to the sum of independent random matrices due to $\exp(\mA + \mB) \neq  (\exp \mA) (\exp \mB)$ in general. \cite{tropp2012user} introduced the method of controlling the \emph{trace} of the matrix CGF via an inequality due to \cite{lieb1973convex}. The Lieb-Tropp method is later furthered by \cite{howard2020time} in turn to construct a nonnegative supermartingale for matrix martingale differences. We slightly generalize it as follows.

\begin{lemma}[Lemma 4 in \cite{howard2020time}, rephrased and generalized]\label{lem:howardlieblemma} Let $\{ \mZ_n \}$ be an $\cS_d$-valued, adapted martingale difference sequence. Let $\{\mC_n\}$ be an $\cS_d$-valued adapted process, $\{\mC_n'\}$ be an $\cS_d$-valued predictable process. If 
\begin{equation}\label{eqn:howard-general}
   \Exp (\exp( \mZ_n -  \mC_n) | \cF_{n-1}) \matle \exp(  \mC_n'  ),
\end{equation}
holds for all $n$, then the process
\begin{equation}\label{eqn:howard-gen-nsm}
  \textstyle  L_n = \tr \exp \left( \sum_{i=1}^n \mZ_i -  \sum_{i=1}^n  ( \mC_i + \mC_i' )  \right)
\end{equation}
is a nonnegative supermartingale. Further,
\begin{equation}\label{eqn:howard-gen-e-proc}
\textstyle   L_n \ge \exp \left( \lambda_{\max}\left(\sum_{i=1}^n \mZ_i \right)-   \lambda_{\max}\left( \sum_{i=1}^n ( \mC_i + \mC_i' ) \right)  \right).
\end{equation}
\end{lemma}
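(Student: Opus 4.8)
The plan is to prove the supermartingale claim \eqref{eqn:howard-gen-nsm} by the Lieb--Tropp concavity trick and then obtain the pointwise lower bound \eqref{eqn:howard-gen-e-proc} from two elementary spectral facts. Throughout, write $\mS_n := \sum_{i=1}^n \mZ_i$ and $\mW_n := \sum_{i=1}^n (\mC_i + \mC_i')$, so that $L_n = \tr\exp(\mS_n - \mW_n)$, with the empty sums at $n=0$ giving $L_0 = \tr\exp(\mathbf{0}) = \tr \mI = d > 0$. Nonnegativity is immediate, since $\tr\exp$ of a symmetric matrix is a sum of positive exponentials of its (real) eigenvalues.

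For the supermartingale inequality $\Exp[L_n \mid \cF_{n-1}] \le L_{n-1}$, I would condition on $\cF_{n-1}$ and isolate the fresh increment. Set $\mH := \mS_{n-1} - \mW_{n-1} - \mC_n'$, which is $\cF_{n-1}$-measurable precisely because $\mC_n'$ is predictable; note $\mS_{n-1}, \mW_{n-1}$ are $\cF_{n-1}$-measurable as well. Then $L_n = \tr\exp\bigl(\mH + (\mZ_n - \mC_n)\bigr)$, and setting $\mA_n := \exp(\mZ_n - \mC_n) \matgt \mathbf{0}$ we have $\log\mA_n = \mZ_n - \mC_n$. The key step invokes Lieb's theorem: for fixed symmetric $\mH$, the real-valued functional $\mA \mapsto \tr\exp(\mH + \log\mA)$ is concave on the positive-definite cone. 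Applying Jensen's inequality for this concave functional to the $\cF_{n-1}$-conditional law of $\mA_n$ gives
\[
\Exp[L_n \mid \cF_{n-1}] = \Exp\bigl[\tr\exp(\mH + \log\mA_n) \mid \cF_{n-1}\bigr] \le \tr\exp\bigl(\mH + \log\Exp[\mA_n \mid \cF_{n-1}]\bigr).
\]
This is legitimate because the hypothesis \eqref{eqn:howard-general} already presupposes that $\Exp[\mA_n \mid \cF_{n-1}]$ exists and is dominated by $\exp(\mC_n')$, so the conditional expectation inside is finite.

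From here the two monotonicity facts recorded in the preliminaries close the argument. Since \eqref{eqn:howard-general} reads $\Exp[\mA_n \mid \cF_{n-1}] \matle \exp(\mC_n')$ and the matrix logarithm is monotone, $\log\Exp[\mA_n \mid \cF_{n-1}] \matle \mC_n'$. Adding the fixed $\mH$ preserves the Loewner order, and because $\exp$ is an increasing scalar function the lifted map $\tr\circ\exp$ is monotone; hence $\tr\exp(\mH + \log\Exp[\mA_n \mid \cF_{n-1}]) \le \tr\exp(\mH + \mC_n')$. Now $\mH + \mC_n' = \mS_{n-1} - \mW_{n-1}$, as the $\mC_n'$ terms cancel, so $\Exp[L_n \mid \cF_{n-1}] \le \tr\exp(\mS_{n-1} - \mW_{n-1}) = L_{n-1}$, proving \eqref{eqn:howard-gen-nsm}. (It is worth noting that the martingale-difference structure of $\{\mZ_n\}$ is not invoked directly; the exponential domination hypothesis carries all the weight.)

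For the lower bound \eqref{eqn:howard-gen-e-proc}, I would first use that the trace of a positive-definite matrix dominates its largest eigenvalue, so $L_n = \tr\exp(\mS_n - \mW_n) \ge \lambda_{\max}\bigl(\exp(\mS_n - \mW_n)\bigr) = \exp\bigl(\lambda_{\max}(\mS_n - \mW_n)\bigr)$, the last equality holding since $\exp$ is increasing. Then Weyl's subadditivity $\lambda_{\max}(\mA + \mB) \le \lambda_{\max}(\mA) + \lambda_{\max}(\mB)$, applied with $\mA = \mS_n - \mW_n$ and $\mB = \mW_n$, gives $\lambda_{\max}(\mS_n - \mW_n) \ge \lambda_{\max}(\mS_n) - \lambda_{\max}(\mW_n)$, and the claim follows by monotonicity of $\exp$. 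The only genuinely nontrivial ingredient is Lieb's concavity theorem underlying the Jensen step; everything else is bookkeeping with the monotonicity facts already stated in the \emph{Notation} paragraph. I expect the single point requiring care is confirming that the conditional Jensen inequality is applied to a bona fide concave \emph{real}-valued functional of the random positive-definite matrix $\mA_n$, rather than a matrix-valued map, so that ordinary (scalar) Jensen suffices.
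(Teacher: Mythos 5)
Your proposal is correct and follows essentially the same route as the paper's proof: Lieb's concavity theorem plus conditional Jensen applied to $\tr\exp(\mH + \log\mA)$, followed by monotonicity of the matrix logarithm and of $\tr\circ\exp$ to cancel the $\mC_n'$ terms. The only cosmetic difference is in the lower bound, where you invoke Weyl's subadditivity after extracting $\lambda_{\max}$ from the trace, while the paper first replaces $\sum_i(\mC_i+\mC_i')$ by $\lambda_{\max}\bigl(\sum_i(\mC_i+\mC_i')\bigr)\mI$ inside the exponent; the two are interchangeable bookkeeping.
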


We remark that in the supermartingale \eqref{eqn:howard-gen-nsm}, since the empty sum is the zero matrix, $L_0 = \tr \exp 0 = \tr \mI = d$. This will translate into the $\log(d)$-type dimension dependence in our bounds. The above lemma is proved in \Cref{sec:pfhwl}.

\section{First Matrix EB Inequality: Plug-In}\label{sec:ub-matrix}

The scalar EB inequality \eqref{eqn:mp-eb} by \citet[Theorem 4]{maurer2009empirical} is derived via a union bound between the non-empirical Bennett-Bernstein inequality \eqref{eqn:bi} and a lower tail bound on the Bessel-corrected sample standard deviation. We slightly deviate from their construction by restricting the sample size $n$ to even numbers (discarding an observation if $n$ is odd) and considering the following ``paired'' variance estimator
\begin{equation}\label{eqn:paired}
    \mV^*_n = \frac 1 n (  (\mX_1 -\mX_2)^2 + (\mX_3 -\mX_4)^2 + \dots + (\mX_{n-1} -\mX_n)^2).
\end{equation}
Our first matrix EB inequality follows from applying the non-empirical matrix Bennett-Bernstein inequality \emph{twice}, to the sample average and the paired variance estimator above.

\begin{theorem}[First matrix empirical Bernstein inequality]\label{thm:mp-mbi} Let $n \ge 2$ be even and $\mX_1,\dots, \mX_n$ be $\cS_d^{[0,1]}$-valued independent random matrices with common mean $\mM$ and variance $\mV$. We denote by $\overline{\mX}_n$ their sample average and by $   \mV^*_n$ their paired variance estimator defined in \eqref{eqn:paired}.
    Then, for any $\alpha \in (0,1)$,
    \begin{equation} 
    \Pr\left( \lambda_{\max} \left( \overline\mX_n -\mM \right) \ge   D_n^{\mathsf{meb1}} \right) \le \alpha,
\end{equation}
where
\begin{equation}\label{eqn:meb1}
    D_n^{\mathsf{meb1}} = \frac{ \log\frac{nd}{(n-1)\alpha}   }{3n} + \sqrt{  \frac{2  \| \mV^*_n \| \log\frac{nd}{(n-1)\alpha}  }{n}} + \left( \sqrt{\frac{5}{3}} + 1 \right) \frac{\sqrt{\left(\log\frac{nd}{(n-1)\alpha}\right)\left( \log \frac{2nd}{\alpha}\right) } }{n}.
\end{equation}
Further, if $\mX_1,\dots, \mX_n$ are i.i.d.,
\begin{equation}\label{eqn:asymp-meb1}
    \lim_{n\to\infty} \sqrt{n}   D_n^{\mathsf{meb1}} = \sqrt{2\log(d/\alpha) \|\mV\| }, \quad \text{almost surely}.
\end{equation}
\end{theorem}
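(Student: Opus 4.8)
The plan is to derive \eqref{eqn:meb1} as a union bound over two applications of the oracle matrix Bennett--Bernstein inequality \eqref{eqn:mbi}: one to the centered sample mean, and one to the paired variance estimator \eqref{eqn:paired}, the latter serving to trade the unknown $\|\mV\|$ for the observable $\|\mV^*_n\|$. I would allocate the budget as $\alpha_1 = \tfrac{n-1}{n}\alpha$ to the mean and $\alpha_2 = \tfrac{\alpha}{2n}$ to the variance, so that $\alpha_1 + \alpha_2 \le \alpha$ and the resulting log factors are exactly $\ell_1 := \log\tfrac{nd}{(n-1)\alpha}$ and $\ell_2 := \log\tfrac{2nd}{\alpha}$, the two quantities appearing in \eqref{eqn:meb1}.

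First I would apply \eqref{eqn:mbi} to $\{\mX_i - \mM\}_{i=1}^n$: since $\vzero \matle \mX_i \matle \mI$ and $\mM \matge \vzero$ force $\lambda_{\max}(\mX_i - \mM) \le 1$, while $\sum_{i=1}^n \Exp(\mX_i - \mM)^2 = n\mV$, with probability at least $1-\alpha_1$ one has $\lambda_{\max}(\overline{\mX}_n - \mM) \le \tfrac{\ell_1}{3n} + \sqrt{\tfrac{2\|\mV\|\ell_1}{n}}$. For the variance, set $m = n/2$ and $\mW_k = \tfrac12(\mX_{2k-1}-\mX_{2k})^2$, so $\mV^*_n = \tfrac1m\sum_{k=1}^m \mW_k$, and these blocks are independent across $k$. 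Independence together with the common mean and common variance gives $\Exp\mW_k = \mV$, i.e.\ $\mV^*_n$ is unbiased. Since $\vzero \matle 2\mW_k \matle \mI$, the operator inequality $\mA^2 \matle \mA$ for $\vzero \matle \mA \matle \mI$ yields $\mW_k^2 \matle \tfrac12\mW_k$ and hence $\Exp\mW_k^2 \matle \tfrac12\mV$, so $\sum_{k=1}^m \Exp(\mV - \mW_k)^2 \matle m\cdot\tfrac12\mV$; combined with $\lambda_{\max}(\mV - \mW_k) \le \tfrac12$, applying \eqref{eqn:mbi} to the centered, independent $\mZ_k := \mV - \mW_k$ gives $\lambda_{\max}(\mV - \mV^*_n) \le \tfrac{\ell_2}{3n} + \sqrt{\tfrac{2\|\mV\|\ell_2}{n}}$ with probability at least $1-\alpha_2$.

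On the intersection of these two events (probability at least $1-\alpha$), Weyl's inequality gives $\|\mV\| = \lambda_{\max}(\mV) \le \|\mV^*_n\| + \lambda_{\max}(\mV - \mV^*_n)$, which combined with the variance bound yields the self-referential inequality $\|\mV\| \le \|\mV^*_n\| + \tfrac{\ell_2}{3n} + \sqrt{\tfrac{2\|\mV\|\ell_2}{n}}$. Reading this as a quadratic in $\sqrt{\|\mV\|}$, solving, and applying $\sqrt{a+b} \le \sqrt a + \sqrt b$ gives $\sqrt{\|\mV\|} \le \sqrt{\|\mV^*_n\|} + \tfrac12\sqrt{\tfrac{2\ell_2}{n}} + \tfrac12\sqrt{\tfrac{10\ell_2}{3n}}$. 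Multiplying by the factor $\sqrt{2\ell_1/n}$ from the mean bound's variance term converts $\sqrt{\|\mV\|}$ into exactly $\sqrt{\tfrac{2\|\mV^*_n\|\ell_1}{n}}$ plus the correction $(1 + \sqrt{\tfrac53})\tfrac{\sqrt{\ell_1\ell_2}}{n}$, and adding back the $\tfrac{\ell_1}{3n}$ term reproduces $D_n^{\mathsf{meb1}}$. I expect this last step to be the main obstacle: eliminating the implicit $\|\mV\|$ that still appears inside the variance bound, and tracking the constants tightly enough to land on the clean coefficient $1 + \sqrt{5/3}$ (which is exactly where the boundedness constant $\tfrac12$ for $\mZ_k$ enters).

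For the asymptotic statement \eqref{eqn:asymp-meb1} in the i.i.d.\ case, the $\mW_k$ are i.i.d.\ with mean $\mV$, so the strong law of large numbers (applied entrywise) gives $\mV^*_n \to \mV$ almost surely, whence $\|\mV^*_n\| \to \|\mV\|$ by continuity of the spectral norm. Since $\ell_1 \to \log(d/\alpha)$, the leading term satisfies $\sqrt n \sqrt{\tfrac{2\|\mV^*_n\|\ell_1}{n}} = \sqrt{2\ell_1\|\mV^*_n\|} \to \sqrt{2\log(d/\alpha)\|\mV\|}$ almost surely, while scaling the other two terms by $\sqrt n$ produces $\tfrac{\ell_1}{3\sqrt n}$ and a multiple of $\tfrac{\sqrt{\ell_1\ell_2}}{\sqrt n}$, both of which vanish (using $\ell_2 \sim \log n$); this establishes \eqref{eqn:asymp-meb1}.
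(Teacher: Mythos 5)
Your proposal is correct and follows essentially the same route as the paper: two applications of the oracle matrix Bennett--Bernstein inequality \eqref{eqn:mbi} (to the centered mean and to the paired variance estimator), combined via the imbalanced $\alpha = \tfrac{n-1}{n}\alpha + \tfrac{\alpha}{2n}$ union bound, with the self-referential bound on $\|\mV\|$ resolved to give $\sqrt{\|\mV\|} \le \sqrt{\|\mV^*_n\|} + g\sqrt{\ell_2/n}$. The only cosmetic difference is that you solve the quadratic in $\sqrt{\|\mV\|}$ explicitly, whereas the paper reaches the identical constant $g = \sqrt{5/6} + 1/\sqrt{2}$ (defined by $\tfrac{1}{3g}+\sqrt{2}=g$, i.e.\ the positive root of the same quadratic) via a case split on $\|\mV\| \gtrless g^2\ell_2/n$ and \Cref{lem:sqrt}.
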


\begin{proof}[Proof Sketch]
The inequality follows from the following lower tail bound on $\|  \mV^*_n \|^{1/2}$
\begin{equation}
  \Pr\left(  \|\mV\|^{1/2}  \le \|  \mV^*_n \|^{1/2} + \left(\sqrt{\frac{5}{6}} + \frac{1}{\sqrt{2}} \right) \sqrt{  \frac{\log(2d/\alpha)   }{n}} \right) \ge 1-\alpha,
\end{equation}
and the matrix Bennett-Bernstein inequality \eqref{eqn:mbi} via an $\alpha = \alpha(n-1)/n + \alpha/n$ union bound. 
The full proof can be found in \Cref{sec:pf-1steb}.
\end{proof}

The first order term of the deviation radius \eqref{eqn:meb1} matches the oracle matrix Bernstein inequality \eqref{eqn:mbi}, both being the $\Theta\left(\sqrt{n^{-1} \|\mV \| \log(d/\alpha)}\right)$ variance term. More importantly, the match is precise asymptotically, as is indicated by the limit \eqref{eqn:asymp-meb1} of $\sqrt{n} D_n^{\mathsf{meb1}}$. It is therefore a sharp matrix EB inequality by our standard.
Indeed, this owes much to the imbalanced $\alpha = \alpha(n-1)/n + \alpha/n$ split in the union bound in the proof; if a balanced, or more generally $n$-independent split was employed, the limit would become $\sqrt{2\log(Cd/\alpha) \|\mV \|}$ for some constant $C>1$ instead. A balanced split, however, is exactly what \cite{maurer2009empirical} do in their scalar EB inequality (as well as \cite{kroshnin2024bernstein} concurrently in their matrix EB inequality), leading to the intralogarithmic 
factor $C=2$ as shown in \eqref{eqn:mp-eb}. 
This non-sharpness of the scalar Maurer-Pontil inequality \eqref{eqn:mp-eb}, we remark,
can be avoided as well by switching to the $\alpha = \alpha(n-1)/n + \alpha/n$ imbalanced split instead, which leads to both theoretical sharpness and a significant boost in its large-sample empirical performance. We write it down formally and perform comparative simulations in \Cref{sec:ub}. 

Beyond sharpness,
the second order term of the deviation radius \eqref{eqn:meb1} also has the same rate as the oracle matrix Berntein inequality \eqref{eqn:mbi} up to a logarithmic factor in $n$, both being the boundedness term that decays as $\Tilde{\Theta}(n^{-1})$ as the sample size $n$ grows.
The second order term of \eqref{eqn:meb1} also matches the second order term of the \emph{sharpened} Maurer-Pontil inequality derived in \Cref{sec:ub}, with only a slight inflation of the constant. We remark that \cite{maurer2009empirical} employ a self-bounding technique on the ``classical'' Bessel-corrected scalar sample variance $\hat \sigma_n^2$, and we are not aware such a technique exists for random matrices, leading us to opt for the paired sample variance \eqref{eqn:paired}. In \Cref{sec:classical-sample-variance}, we derive an alternative Maurer-Pontil-style matrix EB inequality using the classical matrix Bessel-corrected sample variance and bound it via the matrix Efron-Stein technique due to \cite{paulin2016efron}. The resulting matrix EB inequality is still sharp due to the similar first order term, but its second order term inflates from $\Tilde{\Theta}(n^{-1})$ to $\Tilde{\Theta}(n^{-3/4})$ and has a slightly worse empirical performance.


\section{Second Matrix EB Inequality: The Supermartingale Method}\label{sec:res}

Our second matrix EB inequality avoids the analysis of the sample variance via the exponential supermartingale technique and opens up for dependent matrices.
Let us, following \cite{howard2020time,howard2021time,waudby2020estimating}, define the function $\psiE:[0,1) \to [0,\infty)$ as $\psiE(\gamma) = -\log(1-\gamma) - \gamma$. The symbol $\psiE$ is from the fact that it is the cumulant generating function (CGF) of a centered standard exponential distribution. The following lemma is a matrix generalization of \citet[Appendix A.8]{howard2021time}, which we prove in \Cref{sec:pfcond}

\begin{lemma}\label{lem:cond-lem}
     Let $\{\mX_n\}$ be an adapted sequence of $\cS_d$-valued random matrices with conditional means $\Exp (\mX_n | \cF_{n-1}) = \mM_n$. Further, suppose there is a predictable and integrable sequence of $\cS_d$-valued random matrices $\{ \widehat \mX_n \}$ such that $\lambda_{\min}(\mX_n - \widehat \mX_n ) \ge -1$. Let
\begin{equation}
     \mE_n =  \exp(\gamma_n(\mX_n - \widehat{\mX}_n)  - \psiE(\gamma_n) (\mX_n - \widehat{\mX}_n)^2) ,\quad \mF_n = \exp(\gamma_n (\mM_n - \widehat{\mX}_n)  ),
\end{equation}
where $\{ \gamma_n \}$ are predictable $(0,1)$-valued scalars. Then,
\begin{equation}
    \Exp (\mE_n|\cF_{n-1}) \matle \mF_{n}.
\end{equation}
\end{lemma}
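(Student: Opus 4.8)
The plan is to condition on $\cF_{n-1}$, reduce the matrix statement to a single pointwise scalar inequality, and then lift that inequality eigenvalue-wise. First I would fix $n$ and work inside $\Exp(\cdot\mid\cF_{n-1})$; since $\widehat{\mX}_n$ and $\gamma_n$ are predictable, they behave as deterministic quantities there, and integrability of $\widehat{\mX}_n$ guarantees the conditional expectations are well-defined. Writing $\mY = \mX_n - \widehat{\mX}_n$, the hypothesis $\lambda_{\min}(\mX_n - \widehat{\mX}_n)\ge -1$ says exactly $\mY \matge -\mI$, i.e.\ every eigenvalue of $\mY$ lies in $[-1,\infty)$, and we have $\mE_n = \exp(\gamma_n\mY - \psiE(\gamma_n)\mY^2)$, $\mF_n = \exp(\gamma_n(\mM_n - \widehat{\mX}_n))$.

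The heart of the argument is the deterministic Loewner bound
\[
\mE_n = \exp\bigl(\gamma_n\mY - \psiE(\gamma_n)\mY^2\bigr) \matle \mI + \gamma_n\mY .
\]
Because $\mY$ and $\mY^2$ share an eigenbasis, the exponent $\gamma_n\mY - \psiE(\gamma_n)\mY^2$ is itself the matrix function $y\mapsto \gamma_n y - \psiE(\gamma_n)y^2$ evaluated at $\mY$; hence $\mE_n$ is the matrix function $y\mapsto \exp(\gamma_n y - \psiE(\gamma_n)y^2)$ of $\mY$, while $\mI + \gamma_n\mY$ is the matrix function $y\mapsto 1+\gamma_n y$ of $\mY$. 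Both are diagonalized by the same orthogonal matrix, so the displayed inequality holds as soon as the scalar inequality
\[
\exp\bigl(\gamma y - \psiE(\gamma)y^2\bigr) \le 1 + \gamma y \qquad\text{for all } y\ge -1,\ \gamma\in(0,1)
\]
holds at each eigenvalue of $\mY$ (note $1+\gamma y \ge 1-\gamma > 0$ there). I stress that this compares two functions of the \emph{same} matrix, which is why it is legitimate despite the failure of operator monotonicity flagged in the Preliminaries.

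I expect this scalar inequality to be the main obstacle and the one step needing genuine computation. I would prove it by taking logarithms (valid since $1+\gamma y>0$) and substituting $u = \gamma y \ge -\gamma$, reducing the claim to $u - \log(1+u) \le \psiE(\gamma)\gamma^{-2}u^2$. Setting $h(u) := u - \log(1+u)$ and using the identity $\psiE(\gamma) = -\gamma - \log(1-\gamma) = h(-\gamma)$, this becomes $h(u)/u^2 \le h(-\gamma)/(-\gamma)^2$ for $u\ge -\gamma$, which follows from the standard fact that $u\mapsto h(u)/u^2$ is nonincreasing on $(-1,\infty)$ (as exploited by \cite{howard2021time,howard2020time}); this monotonicity is verified directly by differentiation, with the $u=0$ case handled by the removable limit $h(u)/u^2\to 1/2$.

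Finally I would integrate and close the chain. Taking $\Exp(\cdot\mid\cF_{n-1})$ preserves the Loewner order and is linear, so
\[
\Exp(\mE_n\mid\cF_{n-1}) \matle \mI + \gamma_n\,\Exp(\mY\mid\cF_{n-1}) = \mI + \gamma_n(\mM_n - \widehat{\mX}_n),
\]
using $\Exp(\mX_n\mid\cF_{n-1}) = \mM_n$ and predictability of $\widehat{\mX}_n$. The elementary operator bound $\mI + \mA \matle \exp(\mA)$ (the eigenvalue-wise lift of $1+a\le e^a$) applied to $\mA = \gamma_n(\mM_n - \widehat{\mX}_n)$ gives $\mI + \gamma_n(\mM_n - \widehat{\mX}_n) \matle \mF_n$, and concatenating the two bounds yields $\Exp(\mE_n\mid\cF_{n-1}) \matle \mF_n$, as claimed.
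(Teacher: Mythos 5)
Your proposal is correct and follows essentially the same route as the paper's proof: reduce to the scalar inequality $\exp(\gamma\xi - \psiE(\gamma)\xi^2) \le 1+\gamma\xi$ for $\xi \ge -1$, lift it to matrices via the transfer rule (functions of the same matrix diagonalize simultaneously), take conditional expectations, and finish with the lifted $1+x\le e^x$. The only difference is that you prove the key scalar inequality from first principles via the monotonicity of $u\mapsto (u-\log(1+u))/u^2$ (a correct argument), whereas the paper simply cites it from \cite{fan2015exponential} as quoted in \cite{howard2021time}.
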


We are now ready to state in full our matrix empirical Bernstein inequality based on the self-normalization technique. The following theorem is stated as a combination of three tools: a nonnegative supermartingale, a time-uniform concentration inequality, and an equivalent 
concentration inequality at a stopping time.

\begin{theorem}[Time-uniform and stopped matrix empirical Bernstein inequalities]\label{thm:empb}
     Let $\{\mX_n\}$ be an adapted sequence of $\cS_d$-valued random matrices with conditional means $\Exp (\mX_n|\cF_{n-1}) = \mM_n$. Let $\{ \widehat\mX_n \}$ be a sequence of predictable and integrable $\cS_d$-valued random matrices such that $\lambda_{\min}(\mX_n - \widehat{\mX}_n) \ge -1$ almost surely.
     Then, for any 
     predictable $(0,1)$-valued sequence $\{\gamma_n\}$, 
     \begin{equation}\label{eqn:eb-nsm}
    L_n^{\mathsf{meb2}} = \tr \exp\left( \sum_{i=1}^n \gamma_i(\mX_i - \mM_i) - \sum_{i=1}^n \psiE(\gamma_i) (\mX_i - \widehat{\mX}_i)^2 \right)
\end{equation}
is a supermartingale. Denote by $\overline{\mX}_{n}^\gamma$ the weighted average
$
    \frac{\gamma_1 \mX_1 + \dots + \gamma_n \mX_n }{\gamma_1 + \dots + \gamma_n}$
w.r.t.\ the positive weight sequence $\{ \gamma_n \}$. Then, for any $\alpha\in(0,1)$,
\begin{equation}\label{eqn:empb-tu}
         \Pr\left(  \text{there exists }n\ge 1, \;    \lambda_{\max} \left( \overline{\mX}_n^\gamma - \overline{\mM}_n^\gamma \right) \ge \frac{  \log(d/\alpha) +\lambda_{\max}\left( \sum_{i=1}^n \psiE(\gamma_i) (\mX_i - \widehat{\mX}_i)^2 \right)  }{\gamma_1 + \dots + \gamma_n} \right) \le \alpha;
     \end{equation}
and for any stopping time $\tau$,  $\alpha\in(0,1)$, 
     \begin{equation}\label{eqn:empb}
         \Pr\left( \lambda_{\max} \left( \overline{\mX}_\tau^\gamma - \overline{\mM}_\tau^\gamma \right) \ge \frac{  \log(d/\alpha) +\lambda_{\max}\left( \sum_{i=1}^\tau \psiE(\gamma_i) (\mX_i - \widehat{\mX}_i)^2 \right)  }{\gamma_1 + \dots + \gamma_\tau} \right) \le \alpha.
     \end{equation}
\end{theorem}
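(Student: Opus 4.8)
The plan is to assemble three ingredients that are already in place: the conditional Loewner bound of \Cref{lem:cond-lem}, the supermartingale construction of \Cref{lem:howardlieblemma}, and Ville's inequality (\Cref{lem:vi}). The only genuine work is a bookkeeping step that exhibits $L_n^{\mathsf{meb2}}$ as an instance of the template \eqref{eqn:howard-gen-nsm}. First I would set $\mZ_n = \gamma_n(\mX_n - \mM_n)$; this is an $\cS_d$-valued martingale difference sequence because $\gamma_n$ is predictable and $\Exp(\mX_n \mid \cF_{n-1}) = \mM_n$, so $\Exp(\mZ_n \mid \cF_{n-1}) = \gamma_n(\mM_n - \mM_n) = 0$. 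To supply the correction terms I would take the adapted process $\mC_n = \gamma_n(\widehat{\mX}_n - \mM_n) + \psiE(\gamma_n)(\mX_n - \widehat{\mX}_n)^2$ and the predictable process $\mC_n' = \gamma_n(\mM_n - \widehat{\mX}_n)$ (predictability of $\mC_n'$ uses that $\gamma_n$, $\mM_n$, and $\widehat{\mX}_n$ are all $\cF_{n-1}$-measurable). This split is engineered so that, on one hand, $\mZ_n - \mC_n = \gamma_n(\mX_n - \widehat{\mX}_n) - \psiE(\gamma_n)(\mX_n - \widehat{\mX}_n)^2$, whence the hypothesis $\Exp(\exp(\mZ_n - \mC_n)\mid\cF_{n-1}) \matle \exp(\mC_n')$ of \Cref{lem:howardlieblemma} is \emph{exactly} the conclusion $\Exp(\mE_n\mid\cF_{n-1}) \matle \mF_n$ of \Cref{lem:cond-lem}; and, on the other hand, the two linear terms cancel in the sum, $\mC_n + \mC_n' = \psiE(\gamma_n)(\mX_n - \widehat{\mX}_n)^2$. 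Consequently $\sum_{i=1}^n \mZ_i - \sum_{i=1}^n(\mC_i + \mC_i')$ is precisely the exponent in \eqref{eqn:eb-nsm}, and \Cref{lem:howardlieblemma} delivers that $L_n^{\mathsf{meb2}}$ is a nonnegative supermartingale with $L_0^{\mathsf{meb2}} = \tr \mI = d$.

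Next I would read off the lower bound \eqref{eqn:howard-gen-e-proc} for this same construction, namely $L_n^{\mathsf{meb2}} \ge \exp(Y_n)$ where $Y_n := \lambda_{\max}(\sum_{i=1}^n \gamma_i(\mX_i - \mM_i)) - \lambda_{\max}(\sum_{i=1}^n \psiE(\gamma_i)(\mX_i - \widehat{\mX}_i)^2)$ is an adapted scalar process. Applying the supremum form of \Cref{lem:vi} to the adapted process $\exp(Y_n) \le L_n^{\mathsf{meb2}}$ with $L_0^{\mathsf{meb2}} = d$ yields $\Pr(\sup_{n} \exp(Y_n) \ge d/\alpha) \le \alpha$. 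Taking logarithms recasts the threshold event as $\lambda_{\max}(\sum_{i} \gamma_i(\mX_i - \mM_i)) \ge \log(d/\alpha) + \lambda_{\max}(\sum_{i} \psiE(\gamma_i)(\mX_i - \widehat{\mX}_i)^2)$. Since $\overline{\mX}_n^\gamma - \overline{\mM}_n^\gamma = (\sum_{i}\gamma_i)^{-1}\sum_{i}\gamma_i(\mX_i - \mM_i)$ and $\lambda_{\max}$ is positively homogeneous with $\gamma_1 + \dots + \gamma_n > 0$, dividing through by $\gamma_1 + \dots + \gamma_n$ converts this into exactly the event appearing in \eqref{eqn:empb-tu}, which proves the time-uniform inequality.

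Finally, the stopped bound \eqref{eqn:empb} follows identically, using instead the stopping-time form of \Cref{lem:vi} applied to $\exp(Y_\tau) \le L_\tau^{\mathsf{meb2}}$ at an arbitrary stopping time $\tau$, followed by the same logarithm-and-rescale manipulation. The main obstacle, though a mild one, is the bookkeeping in the first paragraph: the variance-proxy exponent must be apportioned between the adapted $\mC_n$ and the predictable $\mC_n'$ in precisely the way that makes the residual $\mZ_n - \mC_n$ coincide with the centered exponent of \Cref{lem:cond-lem} while forcing the linear $\gamma_n$-contributions to cancel in $\mC_n + \mC_n'$. Once this matching is set up, the remainder is a mechanical rearrangement of $\lambda_{\max}$ together with a single invocation of Ville's inequality.
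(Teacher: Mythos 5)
Your proposal is correct and follows exactly the paper's own proof: the same choices $\mZ_n = \gamma_n(\mX_n - \mM_n)$, $\mC_n = \gamma_n(\widehat{\mX}_n - \mM_n) + \psiE(\gamma_n)(\mX_n - \widehat{\mX}_n)^2$, $\mC_n' = \gamma_n(\mM_n - \widehat{\mX}_n)$ feeding \Cref{lem:cond-lem} into \Cref{lem:howardlieblemma}, followed by the lower bound \eqref{eqn:howard-gen-e-proc} and Ville's inequality. Your write-up just makes the cancellation $\mC_n + \mC_n' = \psiE(\gamma_n)(\mX_n - \widehat{\mX}_n)^2$ and the final rearrangement more explicit than the paper does.
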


\begin{proof}
Due to \Cref{lem:cond-lem}, we can apply \Cref{lem:howardlieblemma} with $\mZ_n = \gamma_n(\mX_n - \mM_n)$, $\mC_n = \gamma_n(\widehat{\mX}_n - \mM_n ) + \psiE(\gamma_n) (\mX_n - \widehat{\mX}_n)^2$, and $\mC_n' = \gamma_n (\mM_n - \widehat{\mX}_n)$ to see that
\begin{equation}
  \textstyle  L_n^{\mathsf{meb2}} = \tr \exp\left( \sum_{i=1}^n \gamma_i(\mX_i - \mM_i) - \sum_{i=1}^n \psiE(\gamma_i) (\mX_i - \widehat{\mX}_i)^2 \right)
\end{equation}
is a supermartingale, which upper bounds
\begin{equation}\label{eqn:epr}
 \textstyle   \exp \left\{ \lambda_{\max}\left(\sum_{i=1}^n \gamma_i(\mX_i - \mM_i) \right)-   \lambda_{\max}\left( \sum_{i=1}^n \psiE(\gamma_i) (\mX_i - \widehat{\mX}_i)^2 \right)  \right\}.
\end{equation}
Applying \Cref{lem:vi} to \eqref{eqn:epr}, the desired result follows from rearranging.
\end{proof}

Before we remark on the uncompromised \Cref{thm:empb}, let us write down its fixed-time, fine-tuned special case of \eqref{eqn:empb} with $\tau = n$ and conditionally homoscedastic observations. This
 shall justify the ``empirical Bernstein" name it bears.

\begin{corollary}[Second matrix empirical Bernstein inequality]\label{cor:empb-fixn}
Suppose $\alpha \in (0,1)$.
    Let $\mX_1,\dots,\mX_n$ be adapted $\cS_d^{[0,1]}$-valued random matrices with constant conditional mean $\mM = \Exp (\mX_i|\cF_{i-1})$ and constant conditional variance $\mV = \Exp(  (\mX_i - \mM)^2|\cF_{i-1} )$.  Let $\overline{\mX}_i = \frac{1}{i}(\mX_1+\dots+\mX_i)$ and $\overline{\mX}_0 = 0$.
     Define the following variance proxies
    \begin{gather}
       \overline{\mV}_0 = \frac
       1 4 \mI, \quad  \overline{\mV}_k = \frac{1}{k}\sum_{i=1}^k (\mX_i - \overline\mX_k)^2, \quad  \overline{v}_k = \|\overline{\mV}_{k}\| \vee \frac{5\log(d/\alpha)}{n},
    \end{gather}
    and set $\gamma_i = \sqrt{\frac{2\log(d/\alpha)}{n \overline{v}_{i-1}}}$ for $i=1,\dots, n$. Then,
    \begin{equation}\label{eqn:empb-fix-n}
         \Pr\left( \lambda_{\max} \left( \overline{\mX}_n^{\gamma} - \mM \right) \ge D_n^{\mathsf{meb2}}  \right) \le \alpha, \, \text{where} \ D_n^{\mathsf{meb2}} = \frac{  \log(d/\alpha) +\lambda_{\max}\left( \sum_{i=1}^n \psiE(\gamma_i) (\mX_i - \overline{\mX}_{i-1})^2 \right)  }{\gamma_1 + \dots + \gamma_n}.
     \end{equation}
    Further, 
        asymptotically,
        \begin{equation}\label{eqn:ebasymp}
          \lim_{n \to \infty} \sqrt{n}   D_n^{\mathsf{meb2}} = \sqrt{2\log(d/\alpha) \|\mV\|} \quad \text{almost surely}.
        \end{equation}
\end{corollary}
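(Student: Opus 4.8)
The inequality in \eqref{eqn:empb-fix-n} is the stopped bound \eqref{eqn:empb} of \Cref{thm:empb} evaluated at the deterministic time $\tau=n$, so the first task is only to check that the hypotheses of \Cref{thm:empb} hold for the stated choices $\widehat{\mX}_i=\overline{\mX}_{i-1}$ and $\gamma_i=\sqrt{2\log(d/\alpha)/(n\overline{v}_{i-1})}$. I would verify, in order: that $\overline{\mX}_{i-1}$ is a function of $\mX_1,\dots,\mX_{i-1}$, hence $\cF_{i-1}$-measurable and (being bounded) integrable, so $\{\widehat{\mX}_i\}$ is predictable; that since $\mX_i$ and the average $\overline{\mX}_{i-1}$ both lie in $\cS_d^{[0,1]}$ their difference has eigenvalues in $[-1,1]$, giving $\lambda_{\min}(\mX_i-\overline{\mX}_{i-1})\ge-1$; and that each $\gamma_i$ depends only on $\overline{\mV}_{i-1}$, hence is predictable, with the floor $\overline{v}_{i-1}\ge5\log(d/\alpha)/n$ forcing $\gamma_i\le\sqrt{2/5}<1$. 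Constancy of the conditional mean gives $\overline{\mM}_n^\gamma=\mM$, so \eqref{eqn:empb} specializes exactly to \eqref{eqn:empb-fix-n}. This part is routine.

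For the asymptotic \eqref{eqn:ebasymp} I would set $\ell=\log(d/\alpha)$ and $v^\star=\|\mV\|$ (assuming $v^\star>0$; the degenerate case is immediate), and treat the denominator $S_n:=\gamma_1+\dots+\gamma_n$ and numerator $N_n:=\ell+\lambda_{\max}\bigl(\sum_{i=1}^n\psiE(\gamma_i)(\mX_i-\overline{\mX}_{i-1})^2\bigr)$ separately, aiming for $S_n/\sqrt n\to\sqrt{2\ell/v^\star}$ and $N_n\to2\ell$, which together give $\sqrt n\,D_n^{\mathsf{meb2}}=N_n/(S_n/\sqrt n)\to2\ell/\sqrt{2\ell/v^\star}=\sqrt{2\ell v^\star}$. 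Both limits rest on a strong law of large numbers for bounded matrix martingale-difference sequences: $\{\mX_i-\mM\}$ and $\{(\mX_i-\mM)^2-\mV\}$ are bounded $\cF$-martingale differences (the latter using the constant conditional variance), so a coordinatewise SLLN yields $\overline{\mX}_k\to\mM$ and $\tfrac1k\sum_{i=1}^k(\mX_i-\mM)^2\to\mV$ almost surely. Via the identity $\overline{\mV}_k=\tfrac1k\sum_{i=1}^k(\mX_i-\mM)^2-(\overline{\mX}_k-\mM)^2$ this upgrades to $\overline{\mV}_k\to\mV$, and since the floor $5\ell/n$ vanishes, $\overline{v}_k\to v^\star$.

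For the denominator, $S_n=\sqrt{2\ell/n}\sum_{i=1}^n\overline{v}_{i-1}^{-1/2}$, so it suffices to prove the Toeplitz-type limit $\tfrac1n\sum_{i=1}^n\overline{v}_{i-1}^{-1/2}\to(v^\star)^{-1/2}$, obtained by splitting a finite initial block from a tail on which $\overline{v}_{i-1}\to v^\star>0$. For the numerator I would expand $\psiE(\gamma)=\tfrac{\gamma^2}{2}+O(\gamma^3)$; the quadratic part contributes $\tfrac{\ell}{n}\sum_{i=1}^n\overline{v}_{i-1}^{-1}(\mX_i-\overline{\mX}_{i-1})^2$, and a weighted SLLN (weights $\overline{v}_{i-1}^{-1}\to(v^\star)^{-1}$, summands averaging to $\mV$) sends this to $(\ell/v^\star)\mV$, whose largest eigenvalue is $(\ell/v^\star)\lambda_{\max}(\mV)=\ell$ because $\mV\matge0$; adding the free $\ell$ then gives $N_n\to2\ell$.

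The main obstacle is controlling the indices where the variance floor is active, i.e.\ those $i$ with $\|\overline{\mV}_{i-1}\|<5\ell/n$, for which $\gamma_i$ stays bounded away from $0$ and the expansion $\psiE(\gamma_i)\approx\gamma_i^2/2$ fails. My plan is to show that almost surely only finitely many indices are floored (since $\|\overline{\mV}_{i-1}\|\to v^\star>0$), that each contributes a summand $\matle\psiE(\gamma_i)\mI$ of bounded norm, and that against the $\sqrt n$ growth of $S_n$ and the $\ell$-scale of $N_n$ their aggregate effect does not perturb the leading constant; the same finite-block-plus-tail bookkeeping disposes of the cubic remainder $\sum_i O(\gamma_i^3)(\mX_i-\overline{\mX}_{i-1})^2$, which is $O(n^{-1/2})$ on the tail where $\gamma_i=O(n^{-1/2})$. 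Pinning down this negligibility precisely, and verifying that the early, floor-dominated terms really do not disturb the constant in the limit, is the delicate step, and is where the exact form of the floor and the $1/n$ scaling of $\gamma_i^2$ must be exploited.
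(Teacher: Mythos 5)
Your overall route coincides with the paper's: instantiate \Cref{thm:empb} at $\tau=n$ after checking predictability of $\overline{\mX}_{i-1}$ and $\gamma_i$, the eigenvalue lower bound, and $\gamma_i\le\sqrt{2/5}<1$ via the floor; then establish $\overline{\mX}_k\to\mM$, $\overline{\mV}_k\to\mV$, $\overline{v}_k\to\|\mV\|$ by a matrix martingale strong law (the paper's \Cref{lem:mat-slln} and \Cref{cor:consistency-mean,cor:consistency-var}); then handle the denominator by a Ces\`aro limit and the numerator by expanding $\psiE(x)=\sum_{k\ge2}x^k/k$, with the $k\ge3$ terms killed by the extra powers of $n^{-1/2}$. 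The ``weighted SLLN'' you invoke for the quadratic part is carried out in the paper by comparing $\frac1n\sum_i\overline{v}_{i-1}^{-1}(\mX_i-\overline{\mX}_{i-1})^2$ with $\frac1n\sum_i\|\mV\|^{-1}(\mX_i-\mM)^2$ via \Cref{lem:mat-dfsq-bd}, so that step of your plan is sound in outline.

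The gap is exactly where you locate it, and your proposed resolution does not close it. The ``finitely many floored indices are harmless'' argument works for the denominator (a floored index contributes $\overline{v}_{i-1}^{-1/2}=\Theta(\sqrt n)$ to $\sum_i\overline{v}_{i-1}^{-1/2}$, hence $\Theta(n^{-1/2})$ after normalizing by $n$), but not for the numerator: there the target $N_n\to 2\log(d/\alpha)$ is itself $O(1)$, while a floored index $i$ contributes the $n$-independent positive semidefinite matrix $\psiE(\sqrt{2/5})\,(\mX_i-\overline{\mX}_{i-1})^2$ to the sum inside $\lambda_{\max}$. Finitely many $O(1)$ additions to an $O(1)$ quantity do perturb the constant unless each individual contribution vanishes. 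This is not a vacuous worry: $\overline{\mV}_1=(\mX_1-\overline{\mX}_1)^2=0$ identically, so index $i=2$ is floored for \emph{every} $n$, $\gamma_2=\sqrt{2/5}$ never decays, and the term $\psiE(\sqrt{2/5})(\mX_2-\mX_1)^2$ persists in the numerator for all $n$. A complete proof must explain why such terms do not inflate the $\lambda_{\max}$ of the limit beyond $\log(d/\alpha)$, or must modify the proxy so it never vanishes (e.g.\ smoothing $\overline{\mV}_k$ with a $\frac14\mI$ term as in the scalar construction of \cite{waudby2020estimating}). Note that the paper's own limit computation at this point --- the assertion that $\frac1n\sum_i|\overline{v}_{i-1}^{-1}-\|\mV\|^{-1}|\to0$ --- likewise treats the floored summands, for which $\overline{v}_{i-1}^{-1}=n/(5\log(d/\alpha))$, as negligible; so you have identified the one genuinely delicate step, and it requires an actual argument rather than the heuristic you offer.
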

We prove \Cref{cor:empb-fixn} in \Cref{sec:pffix}. 
The asymptotic behavior \eqref{eqn:ebasymp} of deviation bound $ D_n^{\mathsf{meb2}}$ is satisfying as it \emph{adapts fully} to, without knowing, the true variance $\mV$. In particular, if the assumption on the known spectral bound is $\mX_1,\dots, \mX_n \in \cS_d^{[a,b]}$ as opposed to the $\cS_d^{[0,1]}$ stated in \Cref{cor:empb-fixn}, one can apply the result to $\frac{\mX_1 - a}{b-a}, \dots ,\frac{\mX_n - a}{b-a}$ to obtain the same
\begin{equation}
    {\Theta}\left( \sqrt{\frac{\log(d/\alpha) \|\mV\|}{n}}  \right)
\end{equation}
asymptotic deviation which is free of $b-a$.

The three kinds of result stated in \Cref{thm:empb} are for potentially different purposes. The supermartingale \eqref{eqn:eb-nsm} is best as a sequential test for the null 
\begin{equation}
  H_0: \  \Exp(\mX_n | \cF_{n-1}) = \mM_{  \textsf{null} }  \quad \text{for all }n
\end{equation}
by setting each $\mM_i$ to $\mM_{  \textsf{null} }$. The time-uniform concentration inequality \eqref{eqn:empb-tu} can be used to construct a ``confidence sequence'' on the common conditional mean $\mM = \Exp(\mX_n | \cF_{n-1})$; that is, a sequence of confidence balls $B_n = \{ \mM' \in \cS_d : \|  \overline{\mX}_n^{\gamma} -  \mM'  \| \le \rho_n  \}$ such that $\Pr( \mM \in \cap_n B_n  ) \ge 1-\alpha$, leading to the stopped concentration inequality \eqref{eqn:empb} which is a valid confidence ball at a fixed stopping time $B_{\tau}$. We also remark that it is possible to sharpen the confidence ball $B_{\tau}$ at a fixed stopping time by an \emph{a priori} randomization, due to a recent result by \citet[Theorem 4.1]{ramdas2023randomized} called ``uniformly randomized Ville's inequality''. That is, letting  $U\sim \text{Unif}_{(0,1)}$ independent from the filtration $\cF$, one may replace the $\log(d/\alpha)$ term in \eqref{eqn:empb} with the strictly smaller $\log(Ud/\alpha)$.

The $\cF_{i-1}$-measurable term $\widehat{\mX}_i$ in Theorem~\ref{thm:empb} is best understood as a ``plug-in prediction'' of the next observation $\mX_i$. Indeed, whereas the inequality holds under all choices of $\widehat{\mX}_i$, the smaller the ``prediction error'' $(\widehat{\mX}_i  - \mX_i)^2$, the tighter the bound. Thus one may set $\widehat{\mX}_i$ to be the sample average from $\mX_1$ to $\mX_{i-1}$, which is exactly what is done in \Cref{cor:empb-fixn}. 

On the other hand, if the sample size $n$ is \emph{not} fixed in advance and
an infinite sequence of i.i.d.\ (or homoscedastic more generally) observations $\mX_1, \mX_2,\dots$, to construct a tight time-uniform concentration bound or powerful sequential test, we recommend setting the weight sequence $\{ \gamma_n \}$ as follows: for each sample size $n$, temporarily assume that a sample size of $n$ is fixed in advance, compute the optimal choice of weight on $\mX_n$ under this fixed sample size, and set $\gamma_n$ to this optimal choice of weight. This will lead to a vanishing sequence of $\gamma_n = \sqrt{\frac{2\log(d/\alpha)}{n \overline{v}_{n-1}}}$. Under this weight sequence, we see the choice of a \emph{weighted average} 
\begin{equation}
    \widehat{\mX}_n = \overline{\mX}_{n-1}^{\psiE(\gamma)} = \frac{  \sum_{i=1}^{n-1}  \psiE(\gamma_i) \mX_i }{ \sum_{i=1}^{n-1}  \psiE(\gamma_i) }
\end{equation}
is more reasonable as the weighted sum of squares $\sum_{i=1}^n \psiE(\gamma_i) (\mX_i - x)^2$ is minimized by the weighted average
\begin{equation}
     \hat x = \frac{  \sum_{i=1}^{n}  \psiE(\gamma_i) \mX_i }{ \sum_{i=1}^{n-1}  \psiE(\gamma_i) }.
\end{equation}
Of course, as long as $\widehat \mX_n$ is any convex combination of $\mX_1,\dots,\mX_{n-1}$, the condition $\lambda_{\min}(\mX_n - \widehat \mX_n ) \ge -1$ is met when $\{\mX_n\}$ all take values in $\cS_d^{[0,1]}$.

Finally, as a reprise of the shortened version \Cref{prop:informal} stated in the opening, the ``approprioate variance proxy'' $v_{n,\alpha} = v(\alpha, \mX_1,\dots, \mX_n)$ is simply
\begin{equation}
    v_{n,\alpha}  =  \left(\frac{  \log(d/\alpha) +\lambda_{\max}\left( \sum_{i=1}^n \psiE(\gamma_i) (\mX_i - \overline{\mX}_{i-1})^2 \right)  }{\gamma_1 + \dots + \gamma_n}\right)^2 \frac{n}{2\log(d/\alpha)}
\end{equation}
which converges almost surely to $\|\mV \|$ under conditional homoscedasticity due to \eqref{eqn:ebasymp}.

\section{Comparisons} \label{sec:comp}

\subsection{Self-Normalized EB Inequalities for Scalars and Vectors}

Our \Cref{thm:empb} and \Cref{cor:empb-fixn} owe much to the techniques developed by \citet[Theorem 2 and Remark 1]{waudby2020estimating} in the scalar case (who in turn build on the earlier result by \citet[Theorem 4]{howard2021time} via the ``predictable mixing" sequence $\{ \gamma_n \}$). In particular, when $d=1$, our statements match (including constants) exactly the scalar empirical Bernstein inequality counterparts by \cite{waudby2020estimating}: Our supermartingale \eqref{eqn:eb-nsm} coincides with Equation (13) in \cite{waudby2020estimating}; our time-uniform concentration bound \eqref{eqn:empb-tu} becomes identical to Theorem 2 in \cite{waudby2020estimating}; and our fixed-time asymptotics \eqref{eqn:ebasymp} recovers Equation (17) in \cite{waudby2020estimating}. We also note that \cite{waudby2020estimating} assume i.i.d.\-ness to obtain the fixed-time asymptotics, which, according to our result, can be relaxed to martingale dependence.

As can be expected, applying a vector bound to matrices (by flattening) will lead to a very suboptimal result. The self-normalized empirical Bernstein inequality for vectors due to \citet[Corollary 5]{chugg2023time} implies the following for matrices whose Frobenius norm is bounded by $1/2$, for all $\alpha \le 0.1$,
\begin{equation}
    \Pr\left(\| \hat\mM_n -  \mM \|_{\text{F}} \ge 3.25 \sqrt{\frac{  \log(1/\alpha) \tilde \sigma_n^2 } {n}}   \right) \le \alpha.
\end{equation}
Here, $\tilde \sigma_n^2$ converges almost surely to the vectorized variance $\Exp \| \mX_1 - \Exp \mM  \|^2_{\text{F}}$ with i.i.d.\ matrices. Since everything (assumption and result) is in the Frobenius norm, however, translating the result into the spectral norm will incur a dimensional dependence polynomial in $d$.

Finally, we note that the self-normalized empirical Bernstein inequality for Banach spaces due to \cite{martinez2024empirical} is not applicable as $\cS_d$ equipped with the spectral norm is not a 2-smooth Banach space.

\subsection{Non-Empirical Matrix Bernstein and Hoeffding Inequalities}

As we state in the opening \eqref{eqn:mbi} and elaborate further in \Cref{sec:rmk-bb}, \citet[Theorem 1.4]{tropp2012user} proves the following matrix Bennett-Bernstein inequality under the assumptions $\max_{1\le i \le n} \lambda_{\max}(\mX_i) \le 1$ and $ \sum_{i=1}^n\Exp \mX_i^2 \matle  n\mV$:
\begin{equation}\label{eqn:Dtb}
    \Pr\left( \lambda_{\max} \left( \overline\mX_n - \Exp \overline\mX_n \right) \ge D_n^{\mathsf{tb}}  \right) \le \alpha, \quad D_n^{\mathsf{tb}} =  \frac{ B \log(d/\alpha)   }{3n} + \sqrt{  \frac{2 \log(d/\alpha) \|\mV\|  }{n}} .
\end{equation}
We can see that with i.i.d.\ matrices with variance $\mV$, $\sqrt{n}   D_n^{\mathsf{tb}}$ converges to $\sqrt{2\log(d/\alpha) \|\mV\| }$ which is the same limit that both $\sqrt{n}   D_n^{\mathsf{meb1}}$ and  $\sqrt{n}   D_n^{\mathsf{meb2}}$ converge to, stated as \eqref{eqn:asymp-meb1} and \eqref{eqn:ebasymp}. Therefore, our empirical Bernstein inequalities provide a confidence region fully adaptive to the unknown variance $\mV$ and match in asymptotics this oracle Bernstein result which requires $\mV$ to be known. Both are thus \emph{sharp} EB inequalities.
Assumption-wise, it is important to note that it is fair to compare our $\mX_i \in \cS_d^{[0,1]}$ assumption to their $\lambda_{\max}(\mX_i) \le 1$ assumption; no constant is glossed over in making this comparison when and two-sided bound is sought. To see this, the bound by \citet[Theorem 1.4]{tropp2012user} can be applied to $\mX_1 - \mM, \dots, \mX_n - \mM$, and it takes $\mX_1 \in \cS_d^{[0,1]}$ to ensure both $\lambda_{\max}(\mX_1 - \mM) \le 1$ and $\lambda_{\max}(-\mX_1 +\mM) \le 1$ hold.

\citet[Corollary 5.2]{mackey2014matrix} also obtain a matrix Bernstein inequality. However, as they acknowledge in the paper, their bound is strictly looser than the bound by \citet[Theorem 1.4]{tropp2012user}. The bound by \citet[Theorem 3.1]{MINSKER2017111} under the same assumption reads
\begin{equation} \small
    \Pr\left( \lambda_{\max} \left( \overline\mX_n - \Exp \overline\mX_n \right) \ge D_n^{\mathsf{mb}}  \right) \le \alpha, \quad D_n^{\mathsf{mb}} = \frac{ B \log(d'/\alpha) + \sqrt{ 
B^2 \log^2(d'/\alpha) + 18n\log(d'/\alpha) \|\mV\|}  }{3n},
\end{equation}
where $d' = 14 \tr(\mV) / \|\mV \|$, which decides the dimension-free virtue of their result. 
Matrix Bernstein inequalities that are either anytime-valid or empirical remain an open problem.

Finally, we quote the tightest known Hoeffding-type inequalities for matrices in the literature.   \citet[Corollary 4.2]{mackey2014matrix} shows that if independent $\mX_1, \dots, \mX_n$ satisfy $(\mX_i - \Exp \mX_i)^2  \matle \mB$  almost surely, then
\begin{equation}\label{eqn:tropphoef}
  \Pr\left( \lambda_{\max} (\overline{\mX}_n - \Exp \overline{\mX}_n ) \ge  \sqrt{\frac{2 \|\mB \| \log(d/\alpha)}{n} } \right) \le \alpha.
\end{equation}
A time-uniform extension can be achieved by applying Lemma 3(h) in \cite{howard2020time}, but its fixed-time corollary remains identical as \eqref{eqn:tropphoef}. 
The squared boundedness assumption $(\mX_i - \Exp \mX_i)^2  \matle \mB$ implies $\mX_i - \Exp \mX_i \in \cS_d^{ [ - \|\mB\|^{1/2}, \|\mB\|^{1/2} ]   }$, so it is a stronger assumption than the boundedness assumption we make in \Cref{cor:empb-fixn}. Further, 
since  $(\mX_i - \Exp \mX_i)^2  \matle \mB$ implies $\Var (\mX_i) \matle \mB$ and in practice this gap can be arbitrarily large, we see that our empirical Bernstein inequality is asymptotically tighter and the \emph{worst} that can happen is a degradation to this, already tightest, matrix Hoeffding bound, when $\|\Var (\mX_i) \| \approx \|\mB\|$.

The advantage of matrix EB inequalities becomes even clearer when we consider the recurring application example in the matrix concentration literature: covariance matrix estimation.
\begin{example}[Adaptive covariance estimation]
    Let $X_1,\dots, X_n$ be i.i.d.\ random vectors in $\mathbb R^d$ with $\|X_1 \| \le 1$ almost surely, mean $\Exp X_1 = 0$ and covariance matrix $\Exp (X_1 X_1\trsp) = \mSg$. Since
    \begin{equation}
        \lambda_{\max}(X_1 X_1\trsp) = \tr(X_1 X_1\trsp) = \tr (X_1\trsp X_1 ) \le 1, 
    \end{equation}
    we can invoke either of the two matrix EB inequalities (\cref{thm:mp-mbi} or \cref{cor:empb-fixn}) with $\mX_i = X_i X_i\trsp$ and $\mM = \mSg$ to construct confidence sets for $\mSg$ that are adaptive to the unknown 4th moment $\Exp ( \|X_1\|^2 X_1  X_1\trsp)$.
\end{example}

In comparison, covariance estimation bounds derived via \eqref{eqn:mbi} or its variants always have the unknown $\| \mSg \|$ term in the radius of the concentration, from bounding the 4th moment $\Exp ( \|X_1\|^2 X_1  X_1\trsp) \matle \Exp (X_1  X_1\trsp) = \mSg$ \citep{tropp2015introduction,howard2021time}. To turn them into nonasymptotic confidence sets, a further boundedness argument $\|\mSg\| \le 1$ is required, essentially reducing the Bernstein bound to a Hoeffding bound which can be arbitrarily loose. The sharpness of our EB-based methods immediately benefits the downstream applications of covariance estimation in e.g.\ machine learning and signal processing.

\subsection{Simulation}

We compare the terms, $ D_n^{\mathsf{meb1}}$ of the first matrix empirical Bernstein inequality as in \eqref{eqn:meb1}, and $D_n^{\mathsf{meb2}}$ of the second matrix empirical Bernstein inequality as in \eqref{eqn:empb-fix-n}, divided by that of the oracle matrix Bennett-Bernstein inequality $D_n^{\mathsf{tb}}$ as in
\eqref{eqn:Dtb}. We set $\alpha$ to .05, thus comparing the tightness of one-sided 95\%-confidence sets (or equivalently, the spectral diameters of two-sided 90\%-confidence sets). The i.i.d.\ random matrices are generated from 3 fixed orthonormal projections with $d=3$, each with an independent $\operatorname{Unif}_{[0,1]}$ eigenvalue. The comparison is displayed in \Cref{tab:mat-comp}. We see that while for both matrix EB inequalities, the deviations relative to the oracle Bernstein are proved to converge to 1 as $n$ increases, our second matrix EB inequality achieves such ``oracular convergence'' at a much faster rate.  A comparison in the scalar case can be found in \Cref{sec:ub} which conveys a similar message.
\begin{table}[!h]
    \centering
    \begin{tabular}{|c|c|c|} 
    \hline
        Sample Size ($n$) & $D_n^{\mathsf{meb1}}/D_n^{\mathsf{tb}}$ 
        & $D_n^{\mathsf{meb2}}/D_n^{\mathsf{tb}}$  \\ \hline
       100  & 2.612  & 1.397 \\
       1,000 & 1.589  & 1.105 \\
       10,000 & 1.214  & 1.022 \\
       100,000 & 1.072  & 1.007 \\
       1,000,000 & 1.024  & 1.002 \\ \hline
    \end{tabular}
    \caption{Relative lengths of 95\% one-sided confidence sets by two sharp matrix empirical Bernstein inequalities compared to the (oracle) matrix Bennett-Bernstein inequality \eqref{eqn:mbi}.}
    \label{tab:mat-comp}
\end{table}

\section{Summary}

We provide two new matrix concentration inequalities in this paper. The first one is based on the union bound method, and characterizes, in terms of the paired sample variance, the concentration of the sample mean of independent symmetric matrices with bounded largest eigenvalues, common mean, and common variance. The second one is a self-normalized, time-uniform concentration inequality for the weighted sum of martingale difference symmetric matrices with bounded largest eigenvalues, which when weighted properly, becomes an empirical Bernstein inequality that echoes many of the previous self-normalized-type empirical Bernstein inequalities for scalar, vectors, and Banach space elements.

These two matrix EB inequalities have different advantages: the first one is conceptually simpler, requires only the sample mean and a sample variance,
and is computationally easier (both needing $\mathcal{O}(n)$ steps but the first one having smaller constants);
the second EB is empirically tighter across all sample sizes, allows martingale dependence, and
has a time-uniform version.
According to our simulation, the second matrix EB inequality's confidence set is only 10.5\% larger compared to the oracle Bernstein inequality under a sample size of 1,000; and 2.2\% under a sample size of 10,000.
On the other hand, our two matrix EB inequalities both have a closed-form expression, and they
 match in asymptotics the best non-empirical matrix Bernstein inequality in the literature, as they only depend (in the large sample limit) on the true variance of the matrices which is not required to be known in our bounds, but required in non-empirical bounds.
We expect future work to address 
the challenging problem of unifying our methods with those of the dimension-free matrix Bernstein inequality by \cite{MINSKER2017111}.

\subsubsection*{Acknowledgement}

We thank Diego Martinez-Taboada and Arun Kumar Kuchibhotla for helpful discussions. We thank an anonymous referee for pointing out an improvement for \Cref{sec:ub-matrix}.

\bibliography{urmat}

\newpage
\appendix

\section{Additional Proofs}

\subsection{Technical Lemmas}

The following lemma converts bounds on $|a-b|$ to $\sqrt{a} - \sqrt{b}$.

\begin{lemma}\label{lem:sqrt}
    Let $a, b \ge 0$ and $D = |a-b|$. Then,
    \begin{equation}
        \sqrt{a} \le \sqrt{b} + \left( \sqrt{D} \wedge \frac{D}{2\sqrt{b}} \wedge \frac{D}{\sqrt{a}} \right).
    \end{equation}
\end{lemma}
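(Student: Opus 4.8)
The plan is to establish the three bounds $\sqrt a - \sqrt b \le \sqrt D$, $\sqrt a - \sqrt b \le D/(2\sqrt b)$, and $\sqrt a - \sqrt b \le D/\sqrt a$ separately, since the claimed inequality $\sqrt a \le \sqrt b + (\sqrt D \wedge \frac{D}{2\sqrt b} \wedge \frac{D}{\sqrt a})$ is equivalent to $\sqrt a - \sqrt b$ being dominated by each of the three terms simultaneously. First I would dispose of the trivial case: if $a \le b$, then $\sqrt a - \sqrt b \le 0$ while all three candidate terms are nonnegative (reading $D/\sqrt b$ and $D/\sqrt a$ as $+\infty$ whenever the denominator vanishes), so the inequality holds with room to spare. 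I may therefore assume $a > b \ge 0$, in which case $D = a - b > 0$ and $\sqrt a - \sqrt b > 0$.

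The engine of the whole argument is the elementary factorization $D = a - b = (\sqrt a - \sqrt b)(\sqrt a + \sqrt b)$, which lets me rewrite each target bound as a comparison that cancels the common positive factor $\sqrt a - \sqrt b$. For the first bound I square (legitimate since both sides are nonnegative): $\sqrt a - \sqrt b \le \sqrt D$ is equivalent to $(\sqrt a - \sqrt b)^2 \le (\sqrt a - \sqrt b)(\sqrt a + \sqrt b)$, i.e.\ $\sqrt a - \sqrt b \le \sqrt a + \sqrt b$, which holds because $\sqrt b \ge 0$. For the second, $D/(2\sqrt b) = (\sqrt a - \sqrt b)(\sqrt a + \sqrt b)/(2\sqrt b)$, so after cancelling $\sqrt a - \sqrt b$ the claim reduces to $2\sqrt b \le \sqrt a + \sqrt b$, i.e.\ $\sqrt b \le \sqrt a$, true by assumption. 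For the third, $D/\sqrt a = (\sqrt a - \sqrt b)(\sqrt a + \sqrt b)/\sqrt a$, and cancellation reduces the claim to $\sqrt a \le \sqrt a + \sqrt b$, again immediate. Each of the three is thus a one-line monotonicity check once the factorization is in place.

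The hard part, such as it is, will be bookkeeping around the vanishing denominators that make the $\wedge$ well-defined when $a = 0$ or $b = 0$. When $b = 0$ the term $D/(2\sqrt b)$ equals $+\infty$ and simply drops out of the minimum, while the remaining two bounds and the trivial-case argument still apply verbatim; when $a = 0$ we necessarily have $b = 0$ and $D = 0$, so the statement degenerates to $0 \le 0$. I would therefore run the main argument under the standing assumption $a > b$, where $\sqrt a - \sqrt b > 0$ makes every cancellation legitimate, and then record the two degenerate cases explicitly. No further machinery is needed: the entire content lies in the single factorization $a - b = (\sqrt a - \sqrt b)(\sqrt a + \sqrt b)$ together with these elementary comparisons.
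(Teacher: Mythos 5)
Your proposal is correct and follows essentially the same route as the paper's own proof: handle the trivial case $a \le b$, then use the factorization $D = (\sqrt a - \sqrt b)(\sqrt a + \sqrt b)$ to obtain the second and third bounds, with the first bound coming from subadditivity of the square root (which your squaring argument reproves). The only cosmetic difference is that the paper invokes $\sqrt{b+D} \le \sqrt b + \sqrt D$ directly rather than squaring.
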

\begin{proof} Suppose $a > b$ since  the bound is trivial otherwise.
    First, by the subadditivity of the square root, $\sqrt{a} = \sqrt{b+D} \le \sqrt{b} + \sqrt{D}$. Second,
    $D = (\sqrt{a} - \sqrt{b})(\sqrt{a} + \sqrt{b}) \ge (\sqrt{a} - \sqrt{b})2\sqrt{b}$ so $\sqrt{a} \le \sqrt{b} + \frac{D}{2\sqrt{b}}$.  Third,  $D = (\sqrt{a} - \sqrt{b})(\sqrt{a} + \sqrt{b}) \ge (\sqrt{a} - \sqrt{b})\sqrt{a}$ so $
        \sqrt{a} \le \sqrt{b} +  \frac{D}{\sqrt{a}}$.
Taking a minimum completes the proof.
\end{proof}

The following lemma bounds the difference of squares of two matrices.
\begin{lemma}\label{lem:mat-dfsq-bd}
  Let $\mA,\mB$ be two symmetric matrices. $\| \mA^2 - \mB^2 \| \le 2 \| \mB \| \| \mA - \mB  \| + \| \mA - \mB  \|^2$.
\end{lemma}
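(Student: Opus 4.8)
The plan is to expand $\mA^2 - \mB^2$ by writing $\mA = \mB + \mD$ where $\mD := \mA - \mB$, and then apply the triangle inequality and submultiplicativity of the spectral norm.

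First I would substitute $\mA = \mB + \mD$ and expand the square. Because matrix multiplication is noncommutative, this does \emph{not} factor as $(\mA - \mB)(\mA + \mB)$; instead one obtains the three-term expansion
\begin{equation}
  \mA^2 - \mB^2 = (\mB + \mD)^2 - \mB^2 = \mB\mD + \mD\mB + \mD^2.
\end{equation}
This is the only genuinely matrix-specific step: one must resist the temptation to use the scalar factorization and instead keep the cross terms $\mB\mD$ and $\mD\mB$ separate, since they need not be equal.

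Next I would take spectral norms and apply the triangle inequality, then bound each summand by submultiplicativity. We have $\|\mB\mD\| \le \|\mB\|\,\|\mD\|$, likewise $\|\mD\mB\| \le \|\mB\|\,\|\mD\|$, and $\|\mD^2\| \le \|\mD\|^2$. Summing the three bounds yields
\begin{equation}
  \|\mA^2 - \mB^2\| \le \|\mB\mD\| + \|\mD\mB\| + \|\mD^2\| \le 2\|\mB\|\,\|\mD\| + \|\mD\|^2,
\end{equation}
which upon restoring $\mD = \mA - \mB$ is exactly the claimed inequality.

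There is no real obstacle here: the proof is a one-line expansion followed by routine norm estimates, and it uses only that the spectral norm is a submultiplicative matrix norm satisfying the triangle inequality (symmetry of $\mA, \mB$ is not even needed beyond the paper's blanket convention). The only point requiring mild care is the noncommutative expansion in the first display; everything thereafter is mechanical.
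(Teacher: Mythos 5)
Your proof is correct and is essentially identical to the paper's: both substitute $\mA = \mB + (\mA-\mB)$, expand to get $\mB(\mA-\mB) + (\mA-\mB)\mB + (\mA-\mB)^2$, and finish with the triangle inequality and submultiplicativity of the spectral norm. No gaps.
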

\begin{proof} $
         \| \mA^2 - \mB^2 \| = \| (\mB + (\mA - \mB))^2 - \mB^2   \|
        =  \|    \mB (\mA - \mB) + (\mA-\mB)\mB +  (\mA-\mB)^2  \|
        \le  2 \| \mB \| \| \mA - \mB  \| + \| \mA - \mB  \|^2$.
\end{proof}

The following lemma characterizes the smoothness of  $\psiE(x) = -\log(1-x)-x$ at 0.

\begin{lemma}\label{lem:psiE-smoothness}
    When $0 \le x \le \sqrt{2/5}$, $\psiE(x) \le x^2$.
\end{lemma}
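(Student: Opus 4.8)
The plan is to work from the Mercator series $\psiE(x) = -\log(1-x)-x = \sum_{k\ge 2} x^k/k$, which converges on $[0,1)$ and in particular at $x=\sqrt{2/5}<1$, and to compare it termwise against $x^2$. Since $\psiE(0)=0=0^2$ holds with equality, I may assume $x\in(0,\sqrt{2/5}]$ and divide by $x^2>0$ freely. The target $\psiE(x)\le x^2$ is then equivalent to $\sum_{k\ge2} x^{k-2}/k \le 1$, i.e.\ $\tfrac12 + \tfrac{x}{3} + \tfrac{x^2}{4} + \cdots \le 1$.

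The main obstacle is that the crudest termwise estimates are not sharp enough near the endpoint. Replacing every coefficient $1/k$ ($k\ge2$) by $1/2$ gives only $\psiE(x)\le \tfrac{x^2}{2(1-x)}$, which yields $\psiE(x)\le x^2$ merely for $x\le 1/2$; keeping the $x^2/2$ term exactly and bounding the remaining coefficients by $1/3$ improves this only to $x\le 3/5$. Since $\sqrt{2/5}\approx 0.632 > 3/5$, I must retain one further exact term. So I would split $\psiE(x)=\tfrac{x^2}{2}+\tfrac{x^3}{3}+\sum_{k\ge4}x^k/k$ and bound the tail geometrically, $\sum_{k\ge4}x^k/k \le \tfrac14\sum_{k\ge4}x^k = \tfrac{x^4}{4(1-x)}$. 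This reduces the claim to $\tfrac{x^3}{3}+\tfrac{x^4}{4(1-x)}\le \tfrac{x^2}{2}$, equivalently (dividing by $x^2$) to $\tfrac{x}{3}+\tfrac{x^2}{4(1-x)}\le \tfrac12$ on $(0,\sqrt{2/5}]$.

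The left-hand side is a sum of two functions that are nonnegative and increasing on $[0,1)$ (for the second, note $\tfrac{d}{dx}\tfrac{x^2}{1-x}=\tfrac{x(2-x)}{(1-x)^2}>0$), hence it is monotone increasing and attains its maximum over the interval at $x=\sqrt{2/5}$. It then remains to check a single numerical inequality. Substituting $x^2=2/5$ and $x=\sqrt{2/5}=\sqrt{10}/5$ and clearing denominators, I expect this to collapse to the rational statement $5+3\sqrt{10}\le 15$, i.e.\ $3\sqrt{10}\le 10$, i.e.\ $90\le 100$, which is true. This is the step where the specific constant $\sqrt{2/5}$ does its work, and the comfortable $90\le 100$ margin confirms that the bound is not tight at the endpoint.

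As a fallback I would keep in reserve the direct calculus route: set $g(x)=x^2-\psiE(x)=x^2+\log(1-x)+x$, observe $g(0)=g'(0)=0$ and $g''(x)=2-(1-x)^{-2}$, so that $g$ is convex on $[0,1-\tfrac1{\sqrt2}]$ (giving $g\ge0$ there from the vanishing value and slope at $0$) and concave on $[1-\tfrac1{\sqrt2},\sqrt{2/5}]$, where a concave function attains its minimum at an endpoint; one then only needs $g(\sqrt{2/5})\ge0$. I prefer the series route because its final verification is a purely rational inequality, whereas the calculus route leaves a transcendental check involving $\log(1-\sqrt{2/5})$.
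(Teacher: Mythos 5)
Your series argument is correct and complete: the tail bound $\sum_{k\ge4}x^k/k\le \tfrac14\sum_{k\ge4}x^k=x^4/(4(1-x))$ is valid on $[0,1)$, the monotonicity of $x/3+x^2/(4(1-x))$ is established correctly, and the endpoint computation checks out exactly as you predicted: at $x=\sqrt{10}/5$ the left-hand side equals $(5+3\sqrt{10})/30$, so the claim reduces to $3\sqrt{10}\le 10$. This is, however, a genuinely different route from the paper's, which is essentially your ``fallback'': the paper sets $g(x)=\psiE(x)-x^2$, notes $g(0)=0$ and $g(\sqrt{2/5})<0$, and appeals to the sign of $g''(x)=(1-x)^{-2}-2$. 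Two points of comparison. First, your route terminates in a purely rational verification, whereas the paper's endpoint check $g(\sqrt{2/5})<0$ requires evaluating $\log(1-\sqrt{2/5})$ --- exactly the trade-off you identified, and a reasonable ground for preferring the series. Second, your fallback sketch is in fact more careful than the paper's one-line proof: the paper asserts $g''\ge0$ on all of $[0,\sqrt{2/5}]$, but $g''(0)=-1<0$, so convexity of $\psiE(x)-x^2$ holds only for $x\ge 1-1/\sqrt{2}$; one needs the two-regime (concave-then-convex, using $g(0)=g'(0)=0$ on the first piece) structure you describe, or your series bound, to close the argument on the full interval. So your proof is valid and, as written, more airtight than the one in the paper.
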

\begin{proof} Let $g(x) = \psiE(x) -  x^2$. The claim follows from $g''(x) = (1-x)^{-2} - 2 \ge 0$ for $x \in [0, \sqrt{2/5}]$, and $g(0) = 0$, $g(\sqrt{2/5}) < 0$.
\end{proof}

The following \emph{transfer rule} \citep[Equation 2.2]{tropp2012user} is commonly used in deriving matrix bounds.

\begin{lemma}\label{lem:transfer}
Suppose $I \subseteq \mathbb R$ and $f, g:I \to \mathbb R$ satisfies $f(x) \le g(x)$, then, $f(\mX) \matle g(\mX)$ for any $\mX \in \cS_d^I$.
\end{lemma}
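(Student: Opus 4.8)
The plan is to reduce the claim directly to the spectral decomposition of $\mX$ together with the definition of the matrix-to-matrix lift given in the Preliminaries. First I would fix a spectral decomposition $\mX = \mU\trsp \operatorname{diag}[\lambda_1,\dots,\lambda_d]\mU$, where $\mU$ is orthogonal and, because $\mX \in \cS_d^I$, each eigenvalue $\lambda_i$ lies in $I$. By the canonical lift, both $f(\mX) = \mU\trsp \operatorname{diag}[f(\lambda_1),\dots,f(\lambda_d)]\mU$ and $g(\mX) = \mU\trsp \operatorname{diag}[g(\lambda_1),\dots,g(\lambda_d)]\mU$ are built from the \emph{same} orthogonal factor $\mU$, so their difference is
\begin{equation}
  g(\mX) - f(\mX) = \mU\trsp \operatorname{diag}[g(\lambda_1)-f(\lambda_1),\dots,g(\lambda_d)-f(\lambda_d)]\mU.
\end{equation}

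Next I would use the hypothesis $f(x)\le g(x)$ on $I$, evaluated at each $\lambda_i\in I$, to conclude that every diagonal entry $g(\lambda_i)-f(\lambda_i)$ is nonnegative, so the inner diagonal matrix is positive semidefinite. Finally I would invoke the conjugation-invariance of the positive semidefinite cone: for any orthogonal $\mU$ and any $\mD \matge 0$ one has $\mU\trsp \mD \mU \matge 0$, since $\vv\trsp \mU\trsp \mD \mU \vv = (\mU\vv)\trsp \mD (\mU\vv) \ge 0$ for every $\vv$. Applying this to the displayed difference yields $g(\mX)-f(\mX)\matge 0$, which is exactly $f(\mX)\matle g(\mX)$.

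There is no genuine obstacle here; the only point that deserves care is that $f(\mX)$ and $g(\mX)$ are diagonalized in a \emph{common} eigenbasis $\mU$, which is what makes the entrywise scalar inequality transfer cleanly to the Loewner order. This is precisely why the transfer rule compares two functions of the \emph{same} argument rather than one function at two different matrices: the latter comparison is the subtle one that can fail, as the paper already notes in remarking that monotonicity of a scalar $f$ need not survive the lift to $\matle$.
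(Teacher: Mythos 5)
Your proof is correct and is exactly the standard argument: the paper itself gives no proof of this lemma, deferring instead to the citation of \citet[Equation 2.2]{tropp2012user}, where the same spectral-decomposition argument appears. Your observation that $f(\mX)$ and $g(\mX)$ share a common eigenbasis, so the entrywise inequality transfers via conjugation-invariance of the positive semidefinite cone, is precisely the intended reasoning.
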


It is well-known that if $X$ and $Y$ are scalar random variables such that $c \le X \le Y$ almost surely for some constant $c$ and that $\Exp |Y| < \infty$, it follows that $\Exp |X| < \infty$ as well, and $\Exp X \le \Exp Y$. This type of ``implied integrability" appears frequently in scalar concentration bounds.
Let us prove its symmetric matrix extension for the sake of self-containedness.

\begin{lemma}[Dominated integrability]\label{lem:domint}
    Let $\mX$ and $\mY$ be $\cS_d^{[c,\infty)}$-valued random matrices for some $c\in \mathbb R$ such that $\mX \matle \mY$ almost surely. Further, suppose $\Exp \mY$ exists. Then, so does $\Exp \mX$ and $\Exp \mX \matle \Exp \mY$. 
\end{lemma}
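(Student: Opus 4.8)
The plan is to reduce the matrix statement to the scalar ``implied integrability'' fact quoted immediately before the lemma, by testing against fixed deterministic vectors. Throughout, write the hypotheses in Loewner form as $c\mI \matle \mX \matle \mY$ almost surely, and recall that ``$\Exp \mX$ exists'' means each entry $X_{ij}$ is integrable. For any fixed $\vv \in \reals^d$, the scalar quadratic form $\vv\trsp \mX \vv$ is the main object: the Loewner bounds give the pointwise scalar sandwich $c\|\vv\|^2 \le \vv\trsp \mX \vv \le \vv\trsp \mY \vv$ almost surely.

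First I would establish entrywise integrability of $\mX$. The upper bound $\vv\trsp \mY \vv = \sum_{i,j} v_i v_j Y_{ij}$ is a finite linear combination of the integrable entries of $\mY$, hence integrable, while the lower bound $c\|\vv\|^2$ is a constant. The scalar dominated-integrability fact therefore yields $\Exp|\vv\trsp \mX \vv| < \infty$ for every $\vv$. Specializing to $\vv = \vct{e}_i$ gives integrability of the diagonal entries $X_{ii} = \vct{e}_i\trsp \mX \vct{e}_i$, and the polarization identity $2 X_{ij} = (\vct{e}_i + \vct{e}_j)\trsp \mX (\vct{e}_i + \vct{e}_j) - X_{ii} - X_{jj}$ then gives integrability of the off-diagonal entries; hence $\Exp \mX$ exists.

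Second, for the ordering, I would again fix an arbitrary $\vv$. Since $\vv\trsp \mX \vv \le \vv\trsp \mY \vv$ almost surely with both sides now known to be integrable, scalar monotonicity of expectation gives $\Exp[\vv\trsp \mX \vv] \le \Exp[\vv\trsp \mY \vv]$. Interchanging the expectation with the finite deterministic linear form, this reads $\vv\trsp (\Exp \mX) \vv \le \vv\trsp (\Exp \mY) \vv$, and since it holds for every $\vv$, the matrix $\Exp \mY - \Exp \mX$ is positive semidefinite, i.e.\ $\Exp \mX \matle \Exp \mY$.

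The only real subtlety --- and it is mild --- is verifying the domination required by the scalar fact, namely that $\vv\trsp \mX \vv$ sits between a constant and an integrable variable; this is exactly supplied by $c\mI \matle \mX \matle \mY$ together with the integrability of the entries of $\mY$. As an alternative route to integrability one may exploit positive semidefiniteness directly: $\mX - c\mI \matge 0$ forces $|X_{ij} - c\,\delta_{ij}| \le \tr(\mX - c\mI) \le \tr(\mY - c\mI)$ (using that each $2\times 2$ principal minor of a PSD matrix is nonnegative), with the right-hand side integrable since $\Exp \mY$ exists; either route closes the gap.
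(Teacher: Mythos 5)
Your proof is correct and follows essentially the same route as the paper's: test the Loewner inequality against the coordinate vectors $\vct{e}_i$ and $\vct{e}_i+\vct{e}_j$ to get scalar sandwiches, invoke the scalar dominated-integrability fact entrywise, and then deduce the ordering $\Exp \mX \matle \Exp \mY$ from $\vv\trsp(\Exp\mX)\vv \le \vv\trsp(\Exp\mY)\vv$ for all $\vv$. The trace-based alternative you sketch at the end is a valid but unnecessary extra.
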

\begin{proof} Let us prove that each element $X_{ij}$ of the random matrix $\mX$ is integrable.
    Note that for any deterministic $\vv \in \mathbb R^d$, $\vv \trsp \mX \vv \le \vv \trsp \mY \vv$ almost surely. First, taking $\vv = (0,\dots, 0, 1 , 0,\dots 0)\trsp$, we have
    \begin{equation}
        c \le X_{jj} \le Y_{jj} \quad \text{almost surely},
    \end{equation}
    concluding that the diagonal element $X_{jj}$ must be integrable (since $Y_{jj}$ is). Next, taking $\vv = (0,\dots, 0, 1 , 0,\dots ,0, 1 , 0, \dots, 0)\trsp$, we have
    \begin{equation}
       2 c \le 2X_{ij} + X_{ii} + X_{jj} \le 2Y_{ij} + Y_{ii} + Y_{jj} \quad \text{almost surely},
    \end{equation}
    concluding that $2X_{ij} + X_{ii} + X_{jj}$ must be integrable (since $2Y_{ij} + Y_{ii} + Y_{jj}$ is). Therefore, the off-diagonal element $X_{ij}$ is integrable since $X_{ii}$ and $X_{jj}$ are.

    Now that we have established the existence of $\Exp \mX$, it is clear that $\Exp \mX \matle \Exp \mY$ since for any $\vv \in \mathbb R^d$, $
        \vv\trsp (\Exp \mX) \vv = \Exp  (\vv\trsp \mX \vv) \le \Exp  (\vv\trsp \mY \vv) = \vv\trsp (\Exp \mY) \vv$.
\end{proof}

\subsection{Strong Consistency of the Matrix Sample Variance}\label{sec:consistency}

We show in this section that the matrix sample mean and variance are strongly consistent under martingale dependence, which prepares us for the upcoming proof of \Cref{cor:empb-fixn}. First, we provide the following matrix martingale strong law of the large numbers which, we remark, holds for non-square matrices as well.

\begin{lemma}[Matrix martingale SLLN]\label{lem:mat-slln} Let $\mS_n = \sum_{i=1}^n \mZ_i$ be a matrix martingale and $\{U_n \}$ an increasing positive predictable process on $\cF$. For any $p\in[1,2]$, on the set
\begin{equation}
    \left\{ \lim_{n \to \infty} U_n = \infty,  \; \sum_{n=1}^\infty U_n^{-p} \Exp[ \operatorname{norm}^p(\mZ_n) | \cF_{n-1} ] < \infty \right\},
\end{equation}
the process $U_n^{-1} \mS_n$ converges to 0 almost surely. Here, $\operatorname{norm}(\cdot)$ is any matrix norm.
\end{lemma}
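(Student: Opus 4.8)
The plan is to reduce the matrix statement to a scalar martingale strong law entry by entry, and then to combine a martingale convergence theorem with Kronecker's lemma. Since the space of matrices under consideration is finite dimensional, all norms on it are equivalent and norm convergence is equivalent to entrywise convergence; hence it suffices to show that $U_n^{-1}(\mS_n)_{jk}\to 0$ almost surely for each fixed entry $(j,k)$. Because $\mS_n=\sum_{i=1}^n\mZ_i$ is a matrix martingale, each $\mZ_n$ is a martingale difference, so every scalar sequence $\{(\mZ_n)_{jk}\}$ is a scalar martingale difference sequence and $(\mS_n)_{jk}=\sum_{i=1}^n(\mZ_i)_{jk}$ is a scalar martingale. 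Norm equivalence also furnishes a constant $C$ (depending only on the chosen norm and on the dimension) with $|(\mZ_n)_{jk}|\le C\,\operatorname{norm}(\mZ_n)$, whence $\Exp[|(\mZ_n)_{jk}|^p\mid\cF_{n-1}]\le C^p\,\Exp[\operatorname{norm}^p(\mZ_n)\mid\cF_{n-1}]$; therefore on the event in the statement the scalar series $\sum_n U_n^{-p}\,\Exp[|(\mZ_n)_{jk}|^p\mid\cF_{n-1}]$ is finite as well.

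Next, because $\{U_n\}$ is positive and predictable, the weighted partial sums $T_n:=\sum_{i=1}^n U_i^{-1}(\mZ_i)_{jk}$ again form a scalar martingale, with increments $d_i=U_i^{-1}(\mZ_i)_{jk}$ satisfying $\sum_i\Exp[|d_i|^p\mid\cF_{i-1}]=\sum_i U_i^{-p}\,\Exp[|(\mZ_i)_{jk}|^p\mid\cF_{i-1}]<\infty$ on the event of interest. I would then invoke the martingale convergence theorem for martingales with summable conditional $p$-th moments of increments, $p\in[1,2]$, to conclude that $T_n$ converges to a finite limit almost surely on this event.

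Finally I would apply Kronecker's lemma pathwise. On $\{\lim_n U_n=\infty\}$ the weights $U_n(\omega)$ increase to infinity for almost every $\omega$, while the partial sums $T_n(\omega)=\sum_{i=1}^n U_i(\omega)^{-1}(\mZ_i(\omega))_{jk}$ converge; Kronecker's lemma then yields $U_n^{-1}\sum_{i=1}^n U_i\cdot\big(U_i^{-1}(\mZ_i)_{jk}\big)=U_n^{-1}(\mS_n)_{jk}\to 0$. Intersecting the finitely many entrywise null sets gives $\operatorname{norm}(U_n^{-1}\mS_n)\to 0$ almost surely on the stated event.

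The hard part will be the martingale convergence step for the full range $p\in[1,2]$, rather than just the classical $L^2$ case keyed to the predictable quadratic variation. For $p\in[1,2)$ I would establish the required statement by localization: stopping at the first time $\tau_c$ that the running conditional $p$-variation $\sum_{i\le\,\cdot\,}\Exp[|d_i|^p\mid\cF_{i-1}]$ exceeds a level $c$, the stopped martingale has expected $p$-th variation at most $c$ (the indicator $\mathbf{1}_{\{i\le\tau_c\}}$ is $\cF_{i-1}$-measurable, so conditional expectations pull it out), so by the von Bahr--Esseen inequality (the $1\le p\le 2$ analogue of the $L^2$ bound) the stopped process is $L^p$-bounded and hence converges almost surely; for $p=1$ the same localization gives $L^1$-boundedness and Doob's theorem applies. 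Since almost every path has finite conditional $p$-variation it lies in $\{\tau_c=\infty\}$ for some finite $c$, and letting $c\to\infty$ through integers completes the argument.
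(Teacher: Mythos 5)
Your proposal is correct and follows essentially the same route as the paper: reduce to entrywise convergence via equivalence of norms in finite dimensions and then invoke the scalar martingale strong law (Hall--Heyde, Theorem 2.18) for each entry. The only difference is that where the paper cites that scalar result as a black box, you additionally reprove it via Chow's theorem (localization plus von Bahr--Esseen, then Doob) followed by Kronecker's lemma, which is indeed the standard proof of the cited theorem.
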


\begin{proof}
   Since matrix norms are mutually equivalent, it suffices to prove the case where $\operatorname{norm}(\mA)$ is the max norm $ \max_{i,j} |\mA_{ij}|$. By the scalar martingale SLLN \citep[Theorem 2.18]{hall2014martingale}, we see that all entries of $\mS_n$ converge to 0 almost surely.
\end{proof}

This immediately implies the following convergence result of the matrix sample mean and variance, where we take  $\operatorname{norm}(\cdot)$ to be the usual spectral norm $\| \cdot \|$.

\begin{corollary}[Strong consistency of the matrix sample mean]\label{cor:consistency-mean} Let $\{ \mX_n \}$ be adapted to $\cF$ with constant conditional mean $\Exp[\mX_n|\cF_{n-1}] = \mM$ and $\delta \in (0,1]$. If $\sum_{n=1}^\infty n^{-1-\delta} \Exp[ \|\mX_n - \mM \|^{1+\delta} | \cF_{n-1} ] < \infty $ almost surely, then the sample mean $\overline{\mX}_n$ converges to $\mM$ almost surely.
\end{corollary}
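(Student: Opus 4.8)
The plan is to obtain the corollary as an immediate specialization of the matrix martingale SLLN, \Cref{lem:mat-slln}. First I would introduce the centered increments $\mZ_n = \mX_n - \mM$. Since $\{\mX_n\}$ is adapted with conditional mean $\mM$, each $\mZ_n$ is $\cF_n$-measurable and integrable and satisfies $\Exp[\mZ_n \mid \cF_{n-1}] = \Exp[\mX_n \mid \cF_{n-1}] - \mM = \vzero$; hence the partial sums $\mS_n = \sum_{i=1}^n \mZ_i$ form a matrix martingale. The key algebraic observation, which drives the whole argument, is that $\mS_n = \sum_{i=1}^n (\mX_i - \mM) = n(\overline{\mX}_n - \mM)$.

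Next I would choose the normalizing process $U_n = n$, which is deterministic (hence predictable), strictly increasing, positive, and diverges to infinity, so $\lim_{n\to\infty} U_n = \infty$ holds surely. Taking the matrix norm in \Cref{lem:mat-slln} to be the spectral norm $\|\cdot\|$ and setting $p = 1+\delta$ — which lies in $[1,2]$ precisely because $\delta \in (0,1]$ — the summability event appearing in \Cref{lem:mat-slln} becomes $\sum_{n=1}^\infty n^{-(1+\delta)} \Exp[\|\mX_n - \mM\|^{1+\delta} \mid \cF_{n-1}] < \infty$, which is exactly the hypothesis of the corollary and therefore holds almost surely.

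Applying \Cref{lem:mat-slln} on this full-probability event then yields $U_n^{-1}\mS_n \to \vzero$ almost surely. Substituting the identity from the first step, $U_n^{-1}\mS_n = n^{-1}\cdot n(\overline{\mX}_n - \mM) = \overline{\mX}_n - \mM$, so this is precisely the assertion $\overline{\mX}_n \to \mM$ almost surely, completing the argument.

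Because the statement is a direct corollary, I do not anticipate a genuine obstacle; the only points requiring care are the bookkeeping identity $\mS_n = n(\overline{\mX}_n - \mM)$, which guarantees that the normalization $U_n = n$ produces exactly the centered sample mean rather than an unnormalized sum, and the verification that the exponent $p = 1+\delta$ stays within the admissible range $[1,2]$ demanded by \Cref{lem:mat-slln}. The conditional integrability of each $\mZ_n$ needed to make $\mS_n$ a bona fide matrix martingale likewise follows from the finiteness of the individual summands in the hypothesis.
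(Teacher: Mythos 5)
Your proposal is correct and is exactly the argument the paper intends: the paper's proof is the one-line ``It follows directly from \Cref{lem:mat-slln},'' and your instantiation ($\mZ_n = \mX_n - \mM$, $U_n = n$, $p = 1+\delta \in [1,2]$, spectral norm, and the identity $\mS_n = n(\overline{\mX}_n - \mM)$) is the standard way to fill in that one line. No gaps.
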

\begin{proof}
    It follows directly from \Cref{lem:mat-slln}.
\end{proof}
The assumption $\sum_{n=1}^\infty n^{-1-\delta} \Exp[ \|\mX_n - \mM \|^{1+\delta} | \cF_{n-1} ] < \infty $ is satisfied when, for example, all matrices are uniformly bounded or have a common $(1+\delta)$\textsuperscript{th} conditional moment upper bound.

\begin{corollary}[Strong consistency of the matrix sample variance]\label{cor:consistency-var}
    Let $\mX_n$ be adapted to $\cF$ with constant conditional mean $\Exp[\mX_n|\cF_{n-1}] = \mM$ and conditional variance $\Exp[(\mX_n - \mM)^2|\cF_{n-1}] = \mV$. Further, assume that
    \begin{equation}
        \sum_{n=1}^\infty n^{-1-\delta} \Exp[ \|\mX_n \|^{2+2\delta} | \cF_{n-1}  ] < \infty
    \end{equation}
    almost surely for some $\delta \in (0,1]$.
    Then, the sample variance
    \begin{equation}
        \overline \mV_n = \frac 1 n \sum_{i=1}^n (\mX_i - \overline{\mX}_n)^2
    \end{equation}
    converges to $\mV$ almost surely.
\end{corollary}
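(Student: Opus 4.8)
The plan is to reduce the claim to the two convergence tools already established: the matrix martingale strong law of large numbers (\Cref{lem:mat-slln}) and the strong consistency of the sample mean (\Cref{cor:consistency-mean}). The starting point is the standard variance decomposition, which remains valid for symmetric matrices despite noncommutativity. Writing $\va_i = \mX_i - \mM$ and $\overline{\va} = \overline{\mX}_n - \mM = \frac1n\sum_{i=1}^n \va_i$, I expand $(\mX_i - \overline{\mX}_n)^2 = (\va_i - \overline{\va})^2 = \va_i^2 - \va_i\overline{\va} - \overline{\va}\va_i + \overline{\va}^2$; upon averaging over $i$, the two cross terms each collapse to $\overline{\va}^2$ because $\overline{\va}$ factors out of $\frac1n\sum_i \va_i$. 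This gives
\begin{equation}
  \overline{\mV}_n = \frac1n\sum_{i=1}^n (\mX_i - \mM)^2 - (\overline{\mX}_n - \mM)^2 .
\end{equation}
It then suffices to show the first term converges to $\mV$ and the second to the zero matrix, both almost surely.

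The second term is the easier one: by \Cref{cor:consistency-mean}, $\overline{\mX}_n \to \mM$ almost surely, so $(\overline{\mX}_n - \mM)^2 \to 0$ since $\|\mA^2\|\le\|\mA\|^2$. To invoke that corollary I must verify its hypothesis $\sum_n n^{-1-\delta}\Exp[\|\mX_n - \mM\|^{1+\delta}\mid \cF_{n-1}] < \infty$. This follows from the assumed $(2+2\delta)$-th moment bound by two elementary steps: the power-mean split $\|\mX_n - \mM\|^{1+\delta} \le 2^{\delta}(\|\mX_n\|^{1+\delta} + \|\mM\|^{1+\delta})$, and the crude bound $t^{1+\delta}\le 1 + t^{2+2\delta}$ for $t\ge 0$, so that the conditional expectation is controlled by $1 + \|\mM\|^{1+\delta} + \Exp[\|\mX_n\|^{2+2\delta}\mid\cF_{n-1}]$ up to a constant; summing against $n^{-1-\delta}$ converges because $\delta>0$ and the assumption handles the last term.

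For the first term, set $\mW_i = (\mX_i - \mM)^2$, so that $\Exp[\mW_i \mid \cF_{i-1}] = \mV$ by the constant-conditional-variance hypothesis, and $\mS_n = \sum_{i=1}^n (\mW_i - \mV)$ is a matrix martingale. I would apply \Cref{lem:mat-slln} with $U_n = n$ and $p = 1+\delta \in [1,2]$; its conclusion $n^{-1}\mS_n \to 0$ is exactly $\frac1n\sum_i \mW_i \to \mV$. The moment condition to check is $\sum_n n^{-1-\delta}\Exp[\|\mW_n - \mV\|^{1+\delta}\mid \cF_{n-1}] < \infty$, which again reduces to the assumption via $\|\mW_n - \mV\|^{1+\delta}\le 2^{\delta}(\|\mX_n - \mM\|^{2+2\delta} + \|\mV\|^{1+\delta})$ followed by a power-mean split of $\|\mX_n - \mM\|^{2+2\delta}$ into $\|\mX_n\|^{2+2\delta}$ and $\|\mM\|^{2+2\delta}$. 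Combining the two limits yields $\overline{\mV}_n \to \mV$ almost surely.

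I expect no genuine obstacle; the work is entirely bookkeeping in the moment reductions, using $(a+b)^p \le 2^{p-1}(a^p+b^p)$ and $t^{1+\delta}\le 1+t^{2+2\delta}$. The one structural point worth flagging is why the hypothesis is stated at order $2+2\delta$ rather than $1+\delta$: the sample variance is built from the squares $(\mX_i-\mM)^2$, and running the martingale SLLN on these squares at order $p=1+\delta$ requires an $(1+\delta)$-th moment of the squares, i.e.\ a $2(1+\delta)=2+2\delta$-th moment of the $\mX_i$ themselves. The lower-order squared-sample-mean term only needs the weaker $(1+\delta)$-th moment, which the stated hypothesis subsumes.
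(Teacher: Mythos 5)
Your proof is correct and follows essentially the same route as the paper's: both decompose $\overline{\mV}_n$ into an average of squares minus a square of the average, apply the matrix martingale SLLN (via \Cref{cor:consistency-mean} at order $1+\delta$ applied to the squared terms) to the first piece and the strong consistency of the sample mean to the second; the only cosmetic difference is that you center at $\mM$ (so the squares have conditional mean $\mV$) while the paper works with the raw second moments $\mX_i^2$ and $\mQ = \mM^2 + \mV$. Your moment-condition bookkeeping is, if anything, slightly more explicit than the paper's.
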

\begin{proof} Define $\mQ =  \Exp[ \mX_n^2 | \cF_{n-1} ] = \mM^2 + \mV$. First,
$\overline{\mX}_n \to \mM$ almost surely due to the constant variance with \Cref{cor:consistency-mean}. Using the inequality $\| \mA + \mB \|^{1+\delta} \le ( \|\mA\| + \| \mB \|  )^{1+\delta} \le 2^{\delta}( \|\mA\|^{1+\delta} + \|\mB\|^{1+\delta} ) $ (due to triangle and Jensen inequalities), we have
\begin{equation}
   \sum_{n=1}^\infty n^{-1-\delta} \Exp[ \| \mX_n^2 - \mQ \|^{1+\delta} | \cF_{n-1} ] \le  \sum_{n=1}^\infty n^{-1-\delta}  2^{\delta}\Exp[  \| \mX_n\|^{2+2\delta} | \cF_{n-1}   ] + \sum_{n=1}^\infty n^{-1-\delta}  2^{\delta} \mQ^{1+\delta} < \infty.
\end{equation}
So by \Cref{cor:consistency-mean}.
\begin{equation}
   \hat {\mQ}_n  := \frac{1}{n}\sum_{i=1}^n \mX_i^2 \to  \mQ .
   \end{equation}
almost surely. 
Therefore,
\begin{equation}
     \overline \mV_n =    \hat {\mQ}_n  - (\overline\mX_n)^2 \to \mV
\end{equation}
almost surely as well, due to continuity.
\end{proof}
Again, the assumption $  \sum_{n=1}^\infty n^{-1-\delta} \Exp[ \|\mX_n \|^{2+2\delta} | \cF_{n-1}  ] < \infty $ is satisfied when, for example, all matrices are uniformly bounded or have a common $(2+2\delta)$\textsuperscript{th} conditional moment upper bound.

\subsection{Proof of \Cref{lem:howardlieblemma}}\label{sec:pfhwl}
\begin{proof}
    Due to the monotonicity of $\log$, the condition \eqref{eqn:howard-general} implies
    \begin{equation}\label{eqn:howard-general-log}
    \log \Exp (\exp( \mZ_n -  \mC_n) | \cF_{n-1}) \matle \mC_n'.
\end{equation}
    Now recall Lieb's concavity theorem \citep{lieb1973convex}: for any $\mH \in \cS_d$, the map $\mX \mapsto \tr \exp(\mH + \log \mX)$ ($\cS_d^{++} \to (0,\infty)$) is concave. Therefore,
    \begin{align}
       \Exp (L_n|\cF_{n-1}) 
        =  \ &  \Exp \left( \tr \exp \left( \sum_{i=1}^{n-1} \mZ_i -  \sum_{i=1}^{n-1}( \mC_i + \mC_i' )  - \mC_n' + \log   \e^{ \mZ_n -  \mC_n  }  \right) \middle| \cF_{n-1} \right)
       \\
       & \text{(Jensen's inequality)} \nonumber
       \\
       \le \ &   \tr \exp \left( \sum_{i=1}^{n-1}\mZ_i -  \sum_{i=1}^{n-1} ( \mC_i + \mC_i' ) -\mC_n' + \log \Exp  (\e^{ \mZ_n - \mC_n   } |\cF_{n-1}) \right) 
       \\
       & \text{(by \eqref{eqn:howard-general-log} and monotonicity of trace)} \nonumber
       \\
       \le \ &  \tr \exp \left( \sum_{i=1}^{n-1} \mZ_i -  \sum_{i=1}^{n-1}  ( \mC_i + \mC_i' )  - \mC_n' + \mC_n'  \right) = L_{n-1},
    \end{align}
    concluding the proof that $\{ L_n \}$ is a supermartingale.
    Finally, observe that
    \begin{align}
        L_n & =  \tr \exp \left( \sum_{i=1}^n \mZ_i -  \sum_{i=1}^n  ( \mC_i + \mC_i' )  \right)
        \\
        & \ge   \tr \exp \left( \sum_{i=1}^n \mZ_i -  \lambda_{\max} \left(\sum_{i=1}^n  ( \mC_i + \mC_i' ) \right) \mI \right)
        \\
        & \ge  \lambda_{\max}   \exp \left( \sum_{i=1}^n \mZ_i -  \lambda_{\max} \left(\sum_{i=1}^n  ( \mC_i + \mC_i' ) \right) \mI \right) 
        \\
        & = \exp \lambda_{\max} \left( \sum_{i=1}^n \mZ_i -  \lambda_{\max} \left(\sum_{i=1}^n  ( \mC_i + \mC_i' ) \right) \mI \right) 
        \\
        & =  \exp \left( \lambda_{\max}\left(\sum_{i=1}^n \mZ_i \right)-   \lambda_{\max}\left( \sum_{i=1}^n  ( \mC_i + \mC_i' ) \right)  \right),
    \end{align}
    concluding the proof.
\end{proof}

\subsection{Proof of \Cref{thm:mp-mbi}}\label{sec:pf-1steb}

\begin{proof}
    Note that
$2 \mV^*_n$ is the sample average of $n/2$ independent random matrices, each with eigenvalues in $[0, 1]$, of common mean $2\mV$ and second moment upper bound
\begin{equation}
   \| \Exp  (\mX_1 -\mX_2)^4 \| \le  \| \Exp  (\mX_1 -\mX_2)^2 \| = 2 \| \mV \|.
\end{equation}
Thus applying the Matrix Bennett-Bernstein inequality \eqref{eqn:mbi} on $2\mV_n^*$, we see that
\begin{equation}
    \Pr\left( \| 2\mV^*_n - 2\mV \| \ge \frac{  2\log(2d/\alpha)   }{3n} + \sqrt{  \frac{8 \log(2d/\alpha) \|\mV\|  }{n}}  \right) \le \alpha.
\end{equation}
Therefore, with probability at least $1-\alpha$,
\begin{equation}
    | \  \|  \mV^*_n \| -   \|\mV\|  \ | \le \| \mV^*_n - \mV \| \le \frac{  \log(2d/\alpha)   }{3n} + \sqrt{  \frac{2 \log(2d/\alpha) \|\mV\|  }{n}}.
\end{equation}
Denote by $g$ the constant $\sqrt{\frac{5}{6}} + \frac{1}{\sqrt{2}}$, which satisfies $\frac{1}{3g} + \sqrt{2} = g$. The above implies that if $\| \mV \| \ge \frac{g^2 \log(2d/\alpha)}{n}$,
\begin{equation}
    | \  \|  \mV^*_n \| -   \|\mV\|  \ | \le  \sqrt{\frac{  \log(2d/\alpha) \| \mV \|  }{9g^2 n}} + \sqrt{  \frac{2 \log(2d/\alpha) \|\mV\|  }{n}}
     = \sqrt{   \frac{g^2 \log(2d/\alpha) \|\mV\|  }{n}},
\end{equation}
which in turn implies that, via \Cref{lem:sqrt},
\begin{equation}
    \|\mV\|^{1/2}  \le \|  \mV^*_n \|^{1/2} + \sqrt{  \frac{g^2 \log(2d/\alpha)   }{n}}.
\end{equation}
Since the above also holds if $\| \mV \| < \frac{g^2 \log(2d/\alpha)}{n}$, we see that the inequality above holds with probability at least $1-\alpha$ regardless of the true value of $\| \mV \|$.
Integrating the inequality above into the matrix Bennett-Bernstein inequality \eqref{eqn:mbi} via an $\alpha = \alpha(n-1)/n + \alpha/n$ union bound, we arrive at
\begin{equation}\small
   \lambda_{\max} \left( \overline\mX_n -\mM \right) \le \frac{ \log\frac{nd}{(n-1)\alpha}   }{3n} + \sqrt{  \frac{2 \log\frac{nd}{(n-1)\alpha}   }{n}}
    \left(\|  \mV^*_n \|^{1/2} + \left( \sqrt{\frac{5}{6}} + \frac{1}{\sqrt{2}}  \right)\sqrt{  \frac{\log(2nd/\alpha)   }{n}} \right)
\end{equation}
with probability at least $1-\alpha$. This
concludes the proof. The asymptotics \eqref{eqn:asymp-meb1} follows simply from the strong law of large numbers and the continuity of the spectral norm.
\end{proof}

\subsection{Proof of \Cref{lem:cond-lem}}\label{sec:pfcond}

\begin{proof}
Recall that $\psiE(\gamma) = -\log(1-\gamma) - \gamma$.  An inequality by \cite{fan2015exponential} quoted by \citet[Appendix A.8]{howard2021time} states that, for all $0 \le \gamma < 1$ and $\xi \ge -1$,
\begin{equation}
    \exp(\gamma \xi - \psiE(\gamma) \xi^2) \le 1 + \gamma \xi.
\end{equation}
Since $\mX_n - \widehat{\mX}_n \in \cS_d^{[-1,\infty)}$, we can apply the transfer rule (\Cref{lem:transfer}), replacing the scalar $\xi$ above by the matrix $\mX_n - \widehat{\mX}_n$, and plugging in $\gamma = \gamma_n \in (0,1)$,
\begin{equation}
    \exp(\gamma_n(\mX_n - \widehat{\mX}_n)  - \psiE(\gamma_n) (\mX_n - \widehat{\mX}_n)^2) \matle 1 + \gamma_n (\mX_n - \widehat{\mX}_n).
\end{equation}
\Cref{lem:domint} then guarantees the integrability of the left hand side, and that
\begin{align}
  & \Exp\left( \exp(\gamma_n(\mX_n - \widehat{\mX}_n)  - \psiE(\gamma_n) (\mX_n - \widehat{\mX}_n)^2) \middle|\cF_{n-1} \right) \matle  \Exp\left( 1 + \gamma_n (\mX_n - \widehat{\mX}_n) \middle|\cF_{n-1} \right) 
   \\
   = & 1 + \gamma_n (\mM_n - \widehat{\mX}_n)  \matle \exp(\gamma_n (\mM_n - \widehat{\mX}_n)  ),
\end{align}
where in the final step we use the transfer rule again with $1+x\le \exp(x)$ for all $x\in\mathbb R$. This concludes the proof.
\end{proof}

\subsection{Proof of \Cref{cor:empb-fixn}}\label{sec:pffix}

\begin{proof}
    First, it is straightforward that $\lambda_{\max}({\mX_i - \overline{\mX}_{i-1}}) \ge -1$ for every $i = 1,\dots, n$ since both $\mX_i$ and $\overline{\mX}_{i-1}$ take values in $\cS_d^{[0,1]}$, so \Cref{thm:empb} is applicable. Let us prove the two claims about the deviation bound $D_n^{\mathsf{meb2}}$ under $\gamma_i = \sqrt{\frac{2\log(d/\alpha)}{n \overline{v}_{i-1}}}$. Recall that
    \begin{gather}
      \overline{\mV}_0 = \frac
       1 4 \mI, \quad  \overline{\mV}_k = \frac{1}{k}\sum_{i=1}^k (\mX_i - \overline\mX_k)^2, \quad  \overline{v}_k = \|\overline{\mV}_{k}\| \vee \frac{5\log(d/\alpha)}{n},
       \\   \widetilde{s}_n =\lambda_{\max}  \left( \frac{1}{n}\sum_{i=1}^n \frac{ (\mX_i - \overline\mX_{i-1})^2}{\overline{v}_{i-1}}  \right).
    \end{gather}
    Let us compute the almost sure limit $\lim_{n \to \infty}   \sqrt{n} D_n^{\mathsf{meb2}}$. First, the following limits hold almost surely due to \Cref{cor:consistency-mean,cor:consistency-var}:
    \begin{gather}
    \lim_{k\to \infty} \overline{\mX}_k =\mM , \quad
        \lim_{k\to \infty} \overline{\mV}_k =\mV ,  \quad \lim_{k \to \infty} \overline{v}_k = \|\mV \|.
    \end{gather}
Let us compute the limit
\begin{equation}
     \lim_{n \to \infty}   \frac{1}{n}\sum_{i=1}^n 
 \frac{(\mX_i - \overline{\mX}_{i-1})^2 }{ \overline{v}_{i-1}}.
\end{equation}
To do this, we observe that
\begin{equation}
\lim_{n \to \infty}   \frac{1}{n}\sum_{i=1}^n 
 \frac{(\mX_i - \mM)^2 }{ \| \mV \| } = \frac{\mV}{\|\mV\|}
\end{equation}
almost surely due to \Cref{cor:consistency-mean}. Bounding the difference
\begin{align}
    & \left\|  \frac{1}{n}\sum_{i=1}^n 
 \frac{(\mX_i - \overline{\mX}_{i-1})^2 }{ \overline{v}_{i-1}} -  \frac{1}{n}\sum_{i=1}^n 
 \frac{(\mX_i - \mM)^2 }{ \| \mV \| } \right\|
 \\
 \le & \frac{1}{n}\sum_{i=1}^n \left(  \frac{\| (\mX_i - \overline{\mX}_{i-1} )^2 - ( \mX_i - \mM)^2 \| }{\overline{v}_{i-1}} +  \|\mX_i - \mM \|^2|\overline{v}_{i-1}^{-1}  - \| \mV \|^{-1}    |   \right)
 \\
 & \text{(\Cref{lem:mat-dfsq-bd})}
 \\
 \le &  \frac{1}{n}\sum_{i=1}^n \left( \frac{ 2 \| \mX_i - \mM \| \| \overline{\mX}_{i-1} - \mM  \| + \| \overline{\mX}_{i-1} - \mM  \|^2}{\overline{v}_{i-1}} +  \|\mX_i - \mM \|^2 |\overline{v}_{i-1}^{-1}  - \| \mV \|^{-1}    |    \right)
 \\
 & (\| \overline{\mX}_{i-1} - \mM  \| \le 2, \| \mX_i - \mM  \| \le 2)
 \\
 \le & \frac{1}{n} \sum_{i=1}^n \frac{ 6  \| \overline{\mX}_{i-1} - \mM  \| }{\overline{v}_{i-1}} +  \frac{1}{n} \sum_{i=1}^n 4 |\overline{v}_{i-1}^{-1}  - \| \mV \|^{-1}| \to 0.
\end{align}
Therefore, we see that
\begin{equation}
       \frac{1}{n}\sum_{i=1}^n 
 \frac{(\mX_i - \overline{\mX}_{i-1})^2 }{ \overline{v}_{i-1}} \to \frac{\mV}{\| \mV \|}, \quad \widetilde{s}_n =\lambda_{\max}  \left( \frac{1}{n}\sum_{i=1}^n \frac{ (\mX_i - \overline\mX_{i-1})^2}{\overline{v}_{i-1}}  \right) \to 1
\end{equation}
almost surely.

We can now use the expansion $\psiE(x) = \sum_{k=2}^\infty \frac{x^k}{k}$ to obtain,
\begin{align}
   &  \limsup_{n \to \infty} \sqrt{n} D_n^{\mathsf{meb2}}
   \\
   = & \limsup_{n \to \infty}   \frac{  \log(d/\alpha) +\lambda_{\max}\left( \sum_{i=1}^n \psiE\left(\sqrt{\frac{2\log(d/\alpha)}{n \overline{v}_{i-1}}}\right) (\mX_i - \overline{\mX}_{i-1})^2 \right)  }{\frac{1}{n}\sum_{i=1}^n\sqrt{\frac{2\log(d/\alpha)}{\overline{v}_{i-1}}}}
   \\
   \le & \limsup_{n \to \infty}   \frac{  \log(d/\alpha) +\lambda_{\max}\left( \sum_{i=1}^n 
   \frac{1}{2}\left(\sqrt{\frac{2\log(d/\alpha)}{n \overline{v}_{i-1}}}\right)^2 (\mX_i - \overline{\mX}_{i-1})^2 \right)  }{\frac{1}{n}\sum_{i=1}^n\sqrt{\frac{2\log(d/\alpha)}{\overline{v}_{i-1}}}}  
   \\
    + & \sum_{k=3}^\infty \limsup_{n \to \infty} \underbrace{\frac{ \lambda_{\max}\left( \sum_{i=1}^n 
   \frac{1}{k}\left(\sqrt{\frac{2\log(d/\alpha)}{n \overline{v}_{i-1}}}\right)^k (\mX_i - \overline{\mX}_{i-1})^2 \right)  }{\frac{1}{n}\sum_{i=1}^n\sqrt{\frac{2\log(d/\alpha)}{\overline{v}_{i-1}}}}}_{  T_{k, n} } .
\end{align}
Let us prove that $\lim_{n \to \infty} T_{k, n} = 0$ for all $k \ge 3$. To see that,
\begin{align}
    T_{k,n} \le &  \frac{  \sum_{i=1}^n 
   \frac{1}{k}\left(\sqrt{\frac{2\log(d/\alpha)}{n \overline{v}_{i-1}}}\right)^k \|\mX_i - \overline{\mX}_{i-1}\|^2  }{\frac{1}{n}\sum_{i=1}^n\sqrt{\frac{2\log(d/\alpha)}{\overline{v}_{i-1}}}} 
   \le  \frac{  \sum_{i=1}^n 
   \frac{1}{k}\left(\sqrt{\frac{2\log(d/\alpha)}{n \overline{v}_{i-1}}}\right)^k   }{\frac{1}{n}\sum_{i=1}^n\sqrt{\frac{2\log(d/\alpha)}{\overline{v}_{i-1}}}} 
   \\
   = &  \frac{ 
   k^{-1} \left(2\log(d/\alpha)\right)^{\frac{k-1}{2}} n^{-\frac{k-2}{2}} \boxed{ n^{-1} \sum_{i=1}^n 
 \overline{v}_{i-1}^{-k/2}    }
   }{
 \boxed{  n^{-1} \sum_{i=1}^n \overline{v}_{i-1}^{-1/2} }
   }.
\end{align}
Since the boxed terms converge to non-zero quantities, we see that $T_{k,n}$ converges to 0 due to the $n^{- \frac{k-2}{2}}$ term. Therefore,
\begin{align}
     &  \limsup_{n \to \infty} \sqrt{n} D_n^{\mathsf{meb2}}
   \\ 
    \le & \limsup_{n \to \infty}   \frac{  \log(d/\alpha) +\lambda_{\max}\left( \sum_{i=1}^n 
   \frac{1}{2}\left(\sqrt{\frac{2\log(d/\alpha)}{n \overline{v}_{i-1}}}\right)^2 (\mX_i - \overline{\mX}_{i-1})^2 \right)  }{\frac{1}{n}\sum_{i=1}^n\sqrt{\frac{2\log(d/\alpha)}{\overline{v}_{i-1}}}}  
\\
 = & \limsup_{n \to \infty} \sqrt{\frac{\log(d/\alpha)}{2}}  \frac{   \left(1  +\lambda_{\max}\left( \frac{1}{n}\sum_{i=1}^n 
 \frac{(\mX_i - \overline{\mX}_{i-1})^2 }{ \overline{v}_{i-1}} \right) \right) }{\frac{1}{n}\sum_{i=1}^n \overline{v}_{i-1}^{-1/2} } 
 \\
 = & \sqrt{2\log(d/\alpha) \|\mV \|}. 
\end{align}
Similarly, one can show that $ \liminf_{n \to \infty} \sqrt{n}   D_n^{\mathsf{meb2}} \ge \sqrt{2\log(d/\alpha) \|\mV \|}$, concluding the proof. We remark that the proof above strengthens that of \citet[Lemmas 4-8]{waudby2020estimating}.
\end{proof}

\section{Additional Concentration Inequalities}

\subsection{Remarks on the Scalar \eqref{eqn:bi} and Matrix \eqref{eqn:mbi} Bennett-Bernstein Inequalities}\label{sec:rmk-bb}

Non-empirical Bernstein inequalities are typically stated in terms of the upper bound of the tail probability $\Pr( S_n - \Exp S_n \ge t )$. These are derived via Bennett-type inequalities via controlling the function
\begin{equation}
    h(u) = (1+u)\log(1+u) - u \stackrel{(*)}{\ge} \frac{u^2}{2(1+u/3)}.
\end{equation}
We, for statistical purposes however, are interested in deviation bounds under a fixed error probability $\alpha$. The Bennett-to-Bernstein conversion (*) is looser than the following inequality. 

\begin{lemma}\label{lem:audibert-hinv} For all $x\ge 0$,
    $h^{-1}(x) \le \sqrt{2x} + x/3$.
\end{lemma}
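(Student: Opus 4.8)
The plan is to use the monotonicity of $h$ to turn the claim about $h^{-1}$ into a one-sided functional inequality, and then to reduce that inequality—via a convenient substitution—to a statement I can settle by differentiating twice.

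First I would record that on $[0,\infty)$ we have $h'(u) = \log(1+u) \ge 0$ with $h(0)=0$, so $h$ is an increasing bijection of $[0,\infty)$ onto $[0,\infty)$ and $h^{-1}$ is increasing. Hence $h^{-1}(x) \le \sqrt{2x} + x/3$ is equivalent to $x \le h(\sqrt{2x} + x/3)$. I would also note in passing that simply plugging the Bennett-to-Bernstein bound $(*)$, namely $h(u) \ge u^2/(2(1+u/3))$, into this route yields only the weaker conclusion $h^{-1}(x) \le \sqrt{2x} + 2x/3$ (solve the resulting quadratic and split the square root), so a direct argument is genuinely needed to get the sharp linear coefficient $1/3$.

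Next I would substitute $t = \sqrt{2x}$, so that $x = t^2/2$ and the argument becomes $u := \sqrt{2x} + x/3 = t + t^2/6$. The target reduces to showing $\Psi(t) := h(t + t^2/6) - t^2/2 \ge 0$ for all $t \ge 0$. One checks directly that $\Psi(0) = 0$ and $\Psi'(0) = 0$. The crux is the second derivative: writing $u = t + t^2/6$ and $u' = 1 + t/3$, differentiation gives $\Psi''(t) = \frac{(u')^2}{1+u} + \frac{\log(1+u)}{3} - 1$. The pleasant cancellation, which is the heart of the computation, is the exact identity $(u')^2 - (1+u) = -\tfrac{t}{3} - \tfrac{t^2}{18} = -\tfrac{u}{3}$, so that $\frac{(u')^2}{1+u} = 1 - \frac{u/3}{1+u}$ and therefore $\Psi''(t) = \tfrac{1}{3}\left(\log(1+u) - \frac{u}{1+u}\right)$.

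Finally, the elementary inequality $\log(1+u) \ge \frac{u}{1+u}$ holds for all $u \ge 0$ (equality at $u=0$, together with the derivative comparison $\frac{1}{1+u} \ge \frac{1}{(1+u)^2}$), which gives $\Psi'' \ge 0$ on $[0,\infty)$. Integrating from the base point $\Psi'(0)=0$ yields $\Psi' \ge 0$, and integrating once more from $\Psi(0)=0$ yields $\Psi \ge 0$, which is exactly the desired bound. The main obstacle is conceptual rather than computational: it lies entirely in finding this reduction, since it is the substitution $x = t^2/2$ together with the identity $(u')^2 - (1+u) = -u/3$ that collapses an otherwise unwieldy expression into the standard logarithmic inequality—without the substitution the derivatives do not simplify cleanly.
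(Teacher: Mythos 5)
Your proof is correct, and I have verified each step: $h'(u)=\log(1+u)$ so $h$ is an increasing bijection of $[0,\infty)$ onto itself; the substitution $x=t^2/2$, $u=t+t^2/6$ gives $(u')^2-(1+u) = -t/3-t^2/18 = -u/3$ exactly, so $\Psi''(t)=\tfrac13\bigl(\log(1+u)-\tfrac{u}{1+u}\bigr)\ge 0$, and the two integrations from $\Psi(0)=\Psi'(0)=0$ close the argument. Note that the paper does not actually prove this lemma; it defers entirely to Equation (45) onwards of Audibert et al.\ (2009), so there is no in-paper proof to match against. What your argument buys is a short, self-contained calculus verification that could be inlined in the appendix, and your side remark is also accurate and worth keeping: inverting the weaker Bernstein form $h(u)\ge u^2/(2(1+u/3))$ only yields $h^{-1}(x)\le \sqrt{2x}+2x/3$ (solving the quadratic gives $x/3+\sqrt{x^2/9+2x}\le \sqrt{2x}+2x/3$), so the coefficient $1/3$ genuinely requires working with $h$ itself rather than its Bernstein minorant. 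The only cosmetic caveat is that the clean equivalence $h^{-1}(x)\le g(x)\iff x\le h(g(x))$ uses that $g(x)=\sqrt{2x}+x/3$ is nonnegative, which is immediate here but worth a half-sentence.
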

A proof of this polynomial upper bound on $h^{-1}$ can be found from Equation (45) onwards in \cite{audibert2009exploration}. \citet[Theorem 6.1]{tropp2012user} first states a matrix Bennett bound in terms of the $h$ function, then uses (*) to obtain a closed-formed matrix Bernstein bound, both controlling the tail probability $\Pr( \lambda_{\max}( \mS_n - \Exp \mS_n) \ge t )$. Let us use \Cref{lem:audibert-hinv} to recover a fixed-error $\alpha$ bound whose tightness is between the matrix Bennett and the matrix Bernstein, which we already recorded in the paper as \eqref{eqn:mbi}.

\begin{proposition}[Matrix Bennett-Bernstein inequality \eqref{eqn:mbi}]
    Let $\overline\mX_n$ be the sample average of independent $d\times d$ symmetric matrices $\mX_1,\dots, \mX_n$ with common mean $\Exp \mX_i = \mM$, common eigenvalue upper bound $\lambda_{\max}(\mX_i) \le B$, and $ \sum_{i=1}^n\Exp \mX_i^2 \matle n \mV$. For any $\alpha > 0$
\begin{equation}
    \Pr\left( \lambda_{\max} \left( \overline\mX_n -\mM \right) \ge \frac{ B \log(d/\alpha)   }{3n} + \sqrt{  \frac{2 \log(d/\alpha) \|\mV\|  }{n}}  \right) \le \alpha.
\end{equation}
\end{proposition}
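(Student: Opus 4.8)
The plan is to invoke the matrix Bennett tail bound of \citet[Theorem 6.1]{tropp2012user} and then convert it into a fixed-error deviation bound using the sharp inverse estimate \Cref{lem:audibert-hinv} in place of the lossy Bennett-to-Bernstein step $(*)$. Concretely, I would first center: set $\mS_n = \sum_{i=1}^n (\mX_i - \mM)$, a sum of independent zero-mean symmetric matrices. Applying Tropp's theorem yields a tail bound of the shape
\begin{equation}
  \Pr\left( \lambda_{\max}(\mS_n) \ge t \right) \le d \exp\left( - \frac{\sigma^2}{B^2} h\!\left( \frac{Bt}{\sigma^2} \right) \right),
\end{equation}
where $\sigma^2 = \| \sum_{i=1}^n \Exp (\mX_i - \mM)^2 \|$ is the variance parameter and $h(u) = (1+u)\log(1+u) - u$.

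Next I would control the variance parameter by $\|\mV\|$. Since $\Exp(\mX_i - \mM)^2 = \Exp \mX_i^2 - \mM^2 \matle \Exp \mX_i^2$, summing gives $\sum_i \Exp(\mX_i - \mM)^2 \matle \sum_i \Exp \mX_i^2 \matle n\mV$; monotonicity of the spectral norm on positive semidefinite matrices then yields $\sigma^2 \le n\|\mV\|$. To turn the tail bound into a deviation bound, I set the right-hand side equal to $\alpha$ and solve for $t$: because $h$ is strictly increasing on $[0,\infty)$, this gives $t = (\sigma^2/B)\, h^{-1}(B^2 \log(d/\alpha)/\sigma^2)$. Applying \Cref{lem:audibert-hinv} with $x = B^2\log(d/\alpha)/\sigma^2$ bounds $h^{-1}(x) \le \sqrt{2x} + x/3$, and substituting collapses the expression to
\begin{equation}
  t \le \sqrt{2\sigma^2 \log(d/\alpha)} + \frac{B\log(d/\alpha)}{3} \le \sqrt{2 n \|\mV\| \log(d/\alpha)} + \frac{B\log(d/\alpha)}{3}.
\end{equation}
Dividing through by $n$ (so that $\lambda_{\max}(\mS_n)/n = \lambda_{\max}(\overline\mX_n - \mM)$) produces exactly the claimed radius $B\log(d/\alpha)/(3n) + \sqrt{2\log(d/\alpha)\|\mV\|/n}$.

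I expect the main obstacle to be bookkeeping rather than conceptual: one must confirm that the centered matrices still satisfy the boundedness hypothesis with the same constant $B$ (this is where the later restriction to $\cS_d^{[0,1]}$, ensuring $\lambda_{\max}(\mX_i - \mM) \le B$, becomes relevant), and that the algebra after substituting \Cref{lem:audibert-hinv} simplifies cleanly, with the $\sigma^2/B$ prefactor cancelling the $B$'s to leave the advertised $\sqrt{2}$ and $1/3$ constants. A secondary point worth verifying is the monotonicity of $h$ on $[0,\infty)$ so that the inversion is valid, and that the inequality survives the final replacement of $\sigma^2$ by its upper bound $n\|\mV\|$ inside the square root, which it does since the radius is increasing in the variance parameter.
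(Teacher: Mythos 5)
Your proposal is correct and follows essentially the same route as the paper's own proof: invoke the matrix Bennett tail bound of \citet[Theorem 6.1]{tropp2012user} and invert it at error level $\alpha$ via the polynomial bound $h^{-1}(x) \le \sqrt{2x} + x/3$ of \Cref{lem:audibert-hinv}, rather than the lossy conversion $(*)$. The only (harmless) difference is bookkeeping---you carry the exact variance parameter $\sigma^2$ through the inversion and replace it by $n\|\mV\|$ at the end, while the paper substitutes $n\lambda_{\max}(\mV)$ directly into Tropp's exponent---and you correctly flag the same caveat the paper acknowledges elsewhere, namely that the centered matrices must be checked to satisfy the boundedness hypothesis.
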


\begin{proof}
    Due to \citet[Equation (i) in Proof of Theorem 6.1]{tropp2012user}, 
    \begin{equation}
        \Pr\left[  \lambda_{\max} \left( \overline\mX_n -\mM \right) \ge t \right] \le d \cdot \exp \left( - \frac{n \lambda_{\max}(\mV) }{B^2} \cdot h\left(\frac{Bt}{ \lambda_{\max}(\mV)} \right) \right).
    \end{equation}
    Setting the right hand side as $\alpha$, we obtain via \Cref{lem:audibert-hinv}
    \begin{equation}
        t = \frac{\lambda_{\max}(\mV)}{B}  h^{-1}\left( \frac{\log(d/\alpha) B^2}{n \lambda_{\max}(\mV) } \right) \le \frac{\lambda_{\max}(\mV)}{B} \left( \sqrt{\frac{2\log(d/\alpha) B^2}{n \lambda_{\max}(\mV) }} + \frac{\log(d/\alpha) B^2}{3n \lambda_{\max}(\mV) }  \right),
    \end{equation}
    which readily leads to the bound \eqref{eqn:mbi}
    \begin{equation}
    \Pr\left( \lambda_{\max} \left( \overline\mX_n -\mM \right) \ge \frac{ B \log(d/\alpha)   }{3n} + \sqrt{  \frac{2 \log(d/\alpha) \lambda_{\max} (\mV)  }{n}}  \right) \le \alpha. 
\end{equation}
We also remark that the scalar case \eqref{eqn:bi} is when $d=1$.
\end{proof}

\subsection{Sharp Maurer-Pontil Inequality}\label{sec:ub}

\citet[Theorem 4]{maurer2009empirical} derived a scalar empirical Bernstein inequality which we quote as \eqref{eqn:mp-eb}, by a union bound between a scalar Bennett-Bernstein inequality and a tail bound on the sample variance. However, their balanced union bound split $\alpha = \alpha/2 + \alpha/2$ leads to the looser $\log(2/\alpha)$ term. This causes the confidence interval to be 10.9675\% longer when $\alpha= 0.05$ in the large sample limit.
We slightly modify their proof below to obtain a sharp EB inequality for scalars.

\begin{proposition}\label{prop:shmp}
   Let $X_1,\dots, X_n$ be $[0,1]$-bounded independent random scalars with common mean $\mu$ and variance $\sigma^2$. We denote by $\overline{X}_n$ their sample average and $\hat \sigma_n^2$ the Bessel-corrected sample variance. Then, for any $\alpha\in(0,1)$, $
     \Pr\left(  \overline X_n  - \mu  \ge \rho_n   \right) \le \alpha$,
where
\begin{equation}
    \rho_n = \frac{ \log\frac{n}{(n-1)\alpha}  }{3n} + \sqrt{\frac{2 \hat \sigma^2_n \log\frac{n}{(n-1)\alpha} }{n}}  + 2 \sqrt{\frac{\left( \log\frac{n}{(n-1)\alpha}  \right) \left( \log \frac n\alpha  \right)}{n(n-1)}}.
\end{equation}
Further, with i.i.d.\ $X_1,\dots, X_n$,
\begin{equation}
    \lim_{n \to \infty} \sqrt{n} \rho_n =  \sqrt{2 \sigma^2 \log(1/\alpha)}, \quad \text{almost surely}.
\end{equation}
\end{proposition}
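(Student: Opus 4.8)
The plan is to follow the proof of the matrix analogue \Cref{thm:mp-mbi} (carried out in \Cref{sec:pf-1steb}) essentially verbatim, specializing to $d=1$ but making two scalar-specific substitutions: I replace the paired variance estimator by the Bessel-corrected sample variance $\hat\sigma_n^2$, and I control it through the \emph{self-bounding} lower tail bound on the sample standard deviation that is available for scalars (and which is precisely the tool \citet{maurer2009empirical} lacked an analogue of in the matrix case). The sharpness will come, exactly as in \Cref{thm:mp-mbi}, from deploying the \emph{imbalanced} union-bound split $\alpha = \alpha(n-1)/n + \alpha/n$ in place of the balanced split $\alpha/2 + \alpha/2$ used by \citet{maurer2009empirical}.

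Concretely, I would first quote the one-sided sample-standard-deviation tail bound of \citet{maurer2009empirical}: for $[0,1]$-valued i.i.d.\ data and any $\delta\in(0,1)$, with probability at least $1-\delta$,
\[
    \sigma \le \hat\sigma_n + \sqrt{\frac{2\log(1/\delta)}{n-1}},
\]
and instantiate it with $\delta = \alpha/n$, producing the error term $\sqrt{2\log(n/\alpha)/(n-1)}$ that carries the $\log(n/\alpha)$ factor seen in the last summand of $\rho_n$. Second, I would apply the oracle scalar Bennett–Bernstein inequality \eqref{eqn:bi} (with $B=1$ and variance proxy $\sigma^2$) at error level $\alpha(n-1)/n$, obtaining, with probability at least $1-\alpha(n-1)/n$,
\[
    \overline X_n - \mu \le \frac{\log\frac{n}{(n-1)\alpha}}{3n} + \sqrt{\frac{2\sigma^2\log\frac{n}{(n-1)\alpha}}{n}}.
\]
The two error levels sum to $\alpha$, so on the intersection of the two events (probability at least $1-\alpha$) both inequalities hold simultaneously.

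Third, on that intersection I would substitute the upper bound on $\sigma$ into the variance term of the Bernstein bound; since $\sigma\mapsto\sqrt{2\sigma^2\log\frac{n}{(n-1)\alpha}/n}$ is increasing, this gives
\[
    \sqrt{\tfrac{2\log\frac{n}{(n-1)\alpha}}{n}}\,\sigma \le \sqrt{\tfrac{2\log\frac{n}{(n-1)\alpha}}{n}}\,\hat\sigma_n + \sqrt{\tfrac{2\log\frac{n}{(n-1)\alpha}}{n}}\sqrt{\tfrac{2\log(n/\alpha)}{n-1}},
\]
where the last product collapses to exactly $2\sqrt{\log\frac{n}{(n-1)\alpha}\,\log(n/\alpha)/(n(n-1))}$, reproducing $\rho_n$ term by term. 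For the asymptotics I would invoke the strong law of large numbers to get $\hat\sigma_n \to \sigma$ almost surely, note $\log\frac{n}{(n-1)\alpha}\to\log(1/\alpha)$, and observe that after multiplying by $\sqrt n$ the first summand is $O(n^{-1/2})$ and the third is $O(\sqrt{\log n}/\sqrt n)$, both vanishing, whence $\sqrt n\,\rho_n\to\sqrt{2\sigma^2\log(1/\alpha)}$.

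I do not anticipate a genuine obstacle: once the scalar sample-standard-deviation tail is in hand, the argument is a union bound plus elementary algebra. The two points demanding care are (i) citing the correct \emph{one-sided} form of the variance tail with constant $\sqrt{2/(n-1)}$, since we only need to control the event that $\hat\sigma_n$ underestimates $\sigma$ (a lower tail), and (ii) tracking the imbalanced split $\alpha(n-1)/n + \alpha/n$ exactly, since this is precisely what makes the asymptotic intralogarithmic factor degenerate to $\log(1/\alpha)$ rather than $\log(C/\alpha)$ with $C>1$, and hence is the entire source of the claimed sharpness relative to \eqref{eqn:mp-eb}.
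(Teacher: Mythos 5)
Your proposal is correct and is essentially identical to the paper's own proof: both apply the oracle Bennett--Bernstein inequality \eqref{eqn:bi} at level $\alpha(n-1)/n$, control $\sigma - \hat\sigma_n$ via the self-bounding concentration inequality of \citet[Theorem 7]{maurer2009empirical} at level $\alpha/n$, and combine them with the imbalanced union bound, which is indeed the sole source of the sharpness. The algebra collapsing the cross term to $2\sqrt{\log\tfrac{n}{(n-1)\alpha}\,\log(n/\alpha)/(n(n-1))}$ and the limiting argument are exactly as in the paper.
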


\begin{proof}
 By Bennett-Bernstein inequality \eqref{eqn:bi},
\begin{equation}
    \Pr\left(  \overline X_n  - \mu  \ge \frac{ \log(1/\alpha)  }{3n} + \sqrt{\frac{2\sigma^2 \log(1/\alpha)}{n}}    \right) \le \alpha.
\end{equation}
The deviation of $\hat \sigma_n^2$ from $\sigma^2$ is controlled by a self-bounding concentration inequality \citep[Theorem 7]{maurer2009empirical},
\begin{equation}
   \Pr\left(  \sigma - \hat\sigma_n \ge \sqrt{\frac{2 \log(1/\alpha)}{n-1}}  \right) \le \alpha.
\end{equation}
The desired bound thus follows from an $\alpha = \alpha (n-1)/n + \alpha/n$ union bound.
\end{proof}

\begin{table}[!h]
    \centering
    \begin{tabular}{|c|c|c|c|} 
    \hline
        Sample Size ($n$) & 
Original MP
        & Sharp MP & Self-normalized  \\ \hline
       100  & 2.120 & 2.256 & 1.527 \\
       1,000 & 1.441 & 1.476 & 1.081 \\
       10,000 & 1.214 & 1.169 & 1.017 \\
       100,000 & 1.144 & 1.059 & 1.005 \\
       1,000,000 & 1.121 & 1.021 & 1.002 \\ \hline
    \end{tabular}
    \caption{Lengths of 95\% one-sided confidence intervals of three empirical Bernstein inequalities divided by that of the (oracle) Bennett-Bernstein inequality \eqref{eqn:bi}. ``Original MP'' stands for the result of \citet[Theorem 4]{maurer2009empirical}; ``Sharp MP'' our sharpened result \Cref{prop:shmp}; and ``Self-normalized'' the result by \citet[Theorem 2, Remark 1]{waudby2020estimating}.}
    \label{tab:scalar-comp}
\end{table}

As can be seen from the simulation with $\operatorname{Unif}_{[0,1]}$-distributed random variables displayed in \Cref{tab:scalar-comp}, our sharpened EB inequality leads to significant improvement for large samples. Still, the EB inequality based on the self-normalized technique by \cite{howard2021time,waudby2020estimating} has a much smaller gap compared to the oracle Bernstein inequality.

\subsection{First Matrix EB Inequality with the Classical Sample Variance}\label{sec:classical-sample-variance}

Recall in \Cref{sec:ub-matrix} we proved the first matrix EB inequality by applying the matrix Bennett-Bernstein inequality \eqref{eqn:mbi} twice, once on the sample average and once on the paired sample variance $\mV^*_n$. The latter step differs from the scalar result by \cite{maurer2009empirical} which uses a self-bounding technique on the classical Bessel-corrected sample variance.
Intuitively however, the classical matrix Bessel-corrected sample variance for $\mX_1,\dots,\mX_n$
\begin{equation}
    \widehat{\mV}_n = \frac{1}{n(n-1)} \sum_{ 1 \le i < j \le n}(\mX_i - \mX_j)^2,
\end{equation}
should have a better convergence rate to the population variance $\mV$ than $\mV^*_n$ as a full U-statistic. This fact is captured by the self-bounding concentration inequality \cite{maurer2009empirical} employ in the scalar case. For matrices, however, the best analysis of $ \widehat{\mV}_n$ we are aware of comes from the Efron-Stein technique due to \cite{paulin2016efron}, which leads to the following variant of \Cref{thm:mp-mbi}.

\begin{theorem}[First matrix empirical Bernstein inequality, classical sample variance]\label{thm:mp-mbic} Let $\mX_1,\dots, \mX_n$ be $\cS_d^{[0,1]}$-valued independent random matrices with common mean $\mM$ and variance $\mV$. We denote by $\overline{\mX}_n$ their sample average and $\widehat{\mV}_n$ the Bessel-corrected sample variance.
    Then, for any $\alpha \in (0,1)$,
    \begin{equation} 
    \Pr\left( \lambda_{\max} \left( \overline\mX_n -\mM \right) \ge   D_n^{\mathsf{meb1c}} \right) \le \alpha,
\end{equation}
where
\begin{equation}\label{eqn:meb1c}\small
    D_n^{\mathsf{meb1c}} =  \sqrt{  \frac{2 \log \frac{nd}{(n-1)\alpha}    }{n} } \left( \|\widehat\mV_n\|^{1/2}  +  \sqrt{\frac{\log(2nd/\alpha)}{ 2 n \|\widehat{\mV}_n \| }} \wedge \left( \frac{2\log(2nd/\alpha)}{n} \right)^{1/4}  \right)   + \frac{ \log  \frac{nd}{(n-1)\alpha} }{3n} .
\end{equation}
Further, if $\mX_1,\dots, \mX_n$ are i.i.d.,
\begin{equation}\label{eqn:asymp-meb1c}
    \lim_{n\to\infty} \sqrt{n}   D_n^{\mathsf{meb1c}} = \sqrt{2\log(d/\alpha) \|\mV\| }, \quad \text{almost surely}.
\end{equation}
\end{theorem}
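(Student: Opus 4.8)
The plan is to reuse the architecture of the proof of \Cref{thm:mp-mbi} in \Cref{sec:pf-1steb} essentially verbatim, changing only the concentration step for the sample variance. I would apply the matrix Bennett--Bernstein inequality \eqref{eqn:mbi} to $\overline\mX_n$ at error level $\alpha(n-1)/n$ and combine it, through the imbalanced union bound $\alpha = \alpha(n-1)/n + \alpha/n$, with a high-probability upper bound on $\|\mV\|^{1/2}$ in terms of the empirical $\|\widehat\mV_n\|^{1/2}$. The imbalance is precisely what produces the sharp intralogarithmic factor $\log\tfrac{nd}{(n-1)\alpha}$ in the leading term and hence the oracular limit \eqref{eqn:asymp-meb1c}; a balanced split would inflate the constant inside the logarithm, exactly as discussed after \Cref{thm:mp-mbi}.

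The genuinely new step is controlling the classical U-statistic $\widehat\mV_n = \frac{1}{n(n-1)}\sum_{i<j}(\mX_i-\mX_j)^2$, whose expectation is $\mV$. Since $\widehat\mV_n$ is a symmetric matrix-valued function of the independent $\mX_1,\dots,\mX_n$, I would invoke the matrix Efron--Stein exponential concentration inequality of \cite{paulin2016efron}, and the crux is bounding its variance proxy. Replacing a single $\mX_k$ by an independent copy $\mX_k'$ changes $\widehat\mV_n$ by $\frac{1}{n(n-1)}\sum_{j\ne k}\bigl[(\mX_k-\mX_j)^2-(\mX_k'-\mX_j)^2\bigr]$; as every $(\mX_i-\mX_j)^2$ lies in $\cS_d^{[0,1]}$, each of the $n-1$ bracketed terms has spectral norm at most $1$, so the coordinatewise variation is at most $1/n$ and the squared variations sum to $O(1/n)$ in the Loewner order. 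Feeding this proxy into the Efron--Stein bound, applied to both $\widehat\mV_n-\mV$ and $\mV-\widehat\mV_n$ (yielding the factor $2$ inside $\log(2nd/\alpha)$) at level $\alpha/n$, I expect to obtain, with probability at least $1-\alpha/n$,
\[
  \bigl|\,\|\widehat\mV_n\|-\|\mV\|\,\bigr| \;\le\; \|\widehat\mV_n-\mV\| \;\le\; \sqrt{\tfrac{2\log(2nd/\alpha)}{n}} \;=:\; D.
\]

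Given $D$, I would turn this two-sided control of $\bigl|\|\widehat\mV_n\|-\|\mV\|\bigr|$ into the required one-sided bound on $\|\mV\|^{1/2}$ via \Cref{lem:sqrt} with $a=\|\mV\|$ and $b=\|\widehat\mV_n\|$, which gives $\|\mV\|^{1/2}\le\|\widehat\mV_n\|^{1/2}+\bigl(\sqrt D\wedge\tfrac{D}{2\|\widehat\mV_n\|^{1/2}}\wedge\tfrac{D}{\|\mV\|^{1/2}}\bigr)$. I would discard the last branch (it involves the unknown $\|\mV\|$, and dropping it only enlarges the minimum) and substitute the explicit $D$, leaving precisely
\[
  \|\mV\|^{1/2} \le \|\widehat\mV_n\|^{1/2} + \left( \sqrt{\tfrac{\log(2nd/\alpha)}{2n\|\widehat\mV_n\|}} \wedge \Bigl(\tfrac{2\log(2nd/\alpha)}{n}\Bigr)^{1/4} \right),
\]
the bracketed term being exactly the correction appearing in \eqref{eqn:meb1c}; here one uses that the retained minimum is monotone in $D$, so replacing the true $|a-b|$ by its upper bound $D$ is legitimate. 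Inserting this into the Bennett--Bernstein radius at level $\alpha(n-1)/n$ and collecting terms reproduces $D_n^{\mathsf{meb1c}}$. The asymptotics \eqref{eqn:asymp-meb1c} then follow because $\widehat\mV_n\to\mV$ almost surely by \Cref{cor:consistency-var} and the spectral norm is continuous: $\sqrt n$ times the leading term converges to $\sqrt{2\log(d/\alpha)\|\mV\|}$, while the correction is $\tilde O(n^{-3/4})$ and the boundedness term is $\tilde O(n^{-1})$, both vanishing after multiplication by $\sqrt n$.

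The step I expect to be the main obstacle is the variance-proxy computation for the matrix Efron--Stein inequality: one must select the correct form among the proxies of \cite{paulin2016efron} (the bounded-difference / sum-of-squared-ranges version) and track its numerical constant, since any slack there directly perturbs the constant inside $\log(2nd/\alpha)$ in $D$ and would break the clean $\wedge$-form of the correction matching \eqref{eqn:meb1c}. By contrast, the union-bound bookkeeping, the \Cref{lem:sqrt} conversion, and the limiting argument are routine transcriptions of \Cref{sec:pf-1steb}, the only qualitative change being that the second-order rate degrades from $\tilde\Theta(n^{-1})$ (paired estimator) to $\tilde\Theta(n^{-3/4})$ (worst case over $\mV$), because the $(\cdot)^{1/4}$ branch of the minimum now governs the correction when $\|\mV\|$ is small.
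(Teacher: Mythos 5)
Your proposal is correct and follows essentially the same route as the paper's own proof: the matrix Efron--Stein inequality of \cite{paulin2016efron} with the $\matle \frac{1}{2n}\mI$ variance proxy (from each single-coordinate perturbation lying in $\cS_d^{[-1/n,1/n]}$), the conversion via \Cref{lem:sqrt} dropping the $D/\sqrt{a}$ branch, the imbalanced $\alpha = \alpha(n-1)/n + \alpha/n$ union bound with \eqref{eqn:mbi}, and the consistency-based asymptotics. The constants you track, including the $2d$ prefactor yielding $\log(2nd/\alpha)$ and the $\sqrt{\log(2nd/\alpha)/(2n\|\widehat{\mV}_n\|)}\wedge(2\log(2nd/\alpha)/n)^{1/4}$ correction, all match the paper's derivation.
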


\begin{proof}
 We view the classical sample variance $\widehat{\mV}_n\in \cS_d^{[0,1]}$ as a bounded matrix function of independent variables $\mX_1,\dots,\mX_n$. Let $\widehat{\mV}_n^{j}$ be the sample variance by replacing $\mX_j$ with an i.i.d.\ copy $\mX_j'$. The Efron-Stein variance proxy of $\widehat{\mV}_n$ satisfies
\begin{equation}
    \frac{1}{2}\sum_{j=1}^n \Exp[  (\widehat{\mV}_n - \widehat{\mV}_n^{j})^2 | \mX_1, \dots, \mX_n ] \matle \frac{1}{2n} \mI, 
\end{equation}
which can be noted from the fact that each $\widehat{\mV}_n - \widehat{\mV}_n^{j} \in \cS_d^{[-1/n, 1/n]}$.
We now invoke the Efron-Stein tail bound, Corollary 5.1 from \cite{paulin2016efron} to see that for any $t > 0$,
\begin{equation}
  \Pr(  | \, \|  \mV \| - \|\widehat{\mV}_n \| \, | \ge t  ) \le  \Pr( \|  \mV - \widehat{\mV}_n \| \ge t  ) \le 2d \exp\left( \frac{-n t^2}{2} \right).
\end{equation}
Setting the right hand side to $\alpha$, we obtain, with probability at least $1-\alpha$,
\begin{equation}\label{eqn:es-var}
   | \, \|  \mV \| - \|\widehat{\mV}_n \| \, | < \sqrt{\frac{2\log(2d/\alpha)}{n}},
\end{equation}
which, due to \Cref{lem:sqrt}, implies that
\begin{equation}
    \|  \mV \|^{1/2} <  \|\widehat{\mV}_n \|^{1/2} + \sqrt{\frac{\log(2d/\alpha)}{ 2n \|\widehat{\mV}_n \| }} \wedge \left( \frac{2\log(2d/\alpha)}{n} \right)^{1/4}.
\end{equation}
Integrating the inequality above into the matrix Bennett-Bernstein inequality \eqref{eqn:mbi} via an $\alpha = \alpha(n-1)/n + \alpha/n$ union bound concludes the proof. The asymptotics \eqref{eqn:asymp-meb1c} follows simply from the strong consistency of the sample variance and the continuity of the spectral norm.
\end{proof}

We extend the previous simulation shown in \Cref{tab:mat-comp} with this additional matrix EB inequality. As can be seen from \Cref{tab:mat-comp-ext}, using the sample variance leads to a slightly larger confidence set despite the asymptotics \eqref{eqn:asymp-meb1c} still being sharp. We leave it to future work whether an improved analysis of the classical matrix sample variance better than \eqref{eqn:es-var} can lead to a Maurer-Pontil-style matrix EB inequality that beats \Cref{thm:mp-mbi}.
\begin{table}[!h]
    \centering
    \begin{tabular}{|c|c|c|c|} 
    \hline
        Sample Size ($n$) & $D_n^{\mathsf{meb1}}/D_n^{\mathsf{tb}}$ & $D_n^{\mathsf{meb1c}}/D_n^{\mathsf{tb}}$
        & $D_n^{\mathsf{meb2}}/D_n^{\mathsf{tb}}$  \\ \hline
       100  & 2.612  & 3.057 & 1.397 \\
       1,000 & 1.589 & 1.874 & 1.105 \\
       10,000 & 1.214 & 1.313 & 1.022 \\
       100,000 & 1.072 & 1.109 & 1.007 \\
       1,000,000 & 1.024 & 1.037 & 1.002 \\ \hline
    \end{tabular}
    \caption{Relative lengths of 95\% one-sided confidence sets by three sharp matrix empirical Bernstein inequalities compared to the (oracle) matrix Bennett-Bernstein inequality \eqref{eqn:mbi}. This table includes an additional column for $D_n^{\mathsf{meb1c}}$ on top of \Cref{tab:mat-comp}.}
    \label{tab:mat-comp-ext}
\end{table}

\section{Other Scalar EB Inequalities}\label{sec:others}

A wealth of scalar empirical Bernstein inequalities exists in the learning theory literature, especially under the PAC-Bayes \citep{alquier2024user} setup. These are concentration inequalities on the posterior expected deviation, valid simultaneously over all posterior distributions. We provide a brief bibliographical remark in this section.

A PAC-Bayes EB inequality was proved by 
\cite{tolstikhin2013pac} using a technique comparable to \cite{audibert2009exploration,maurer2009empirical}: they used a union bound to combine an oracle Bernstein-style PAC-Bayes bound and a sample variance PAC-Bayes bound. When taking a degenerate parameter space (i.e.\ singleton prior and posterior distributions), one recovers an EB inequality for $[0,1]$-bounded i.i.d.\ random variables, with a larger constant on the $n^{-1/2}$ term compared to \eqref{eqn:mp-eb}, therefore is not sharp by our standard. An alternative to EB inequalities called the ``un-expected Bernstein inequality'' was introduced by \cite{mhammedi2019pac}, which ``together
with the standard Bernstein inequality imply [\emph{sic}] a version of the empirical Bernstein inequality with slightly worse factors'' \citep[Appendix G]{mhammedi2019pac}.
A much tighter, time-uniform PAC-Bayes EB inequality was later proved by \citet[Corollary 4]{jang2023tighter}, much similar to the ``betting'' technique of \cite{waudby2020estimating}. However, no sharp fixed-time EB inequality was proved by \cite{jang2023tighter}. A similar PAC-Bayes time-uniform result was obtained by \citet[Theorem 6]{mhammedi2021risk}, also suffering from non-sharpness when instantiated fixed-time.

We finally quote a recent study by \cite{shekhar2023near} that compares various scalar EB inequalities via their first- and second-order expansions. They show that a class of betting-driven confidence intervals, including those proposed by \cite{waudby2020estimating} (other than \eqref{eqn:eb-informal}) and the recent universal portfolio-based bound by \cite{orabona-up}, have a minimax optimal rate that outperforms even the sharpness criterion \eqref{eqn:sharp} for scalar EB inequalities. However, these confidence sets are not in closed form, and it remains an open problem (even in the scalar case) if there are closed-form empirical inequalities that achieve so.


\end{document}